\documentclass{amsart}

\begin{document}
\renewcommand\theenumi{(\roman{enumi})}
\renewcommand\labelenumi{{\theenumi}}


  \author[M. Manev]{Mancho Manev}
  \address[1]{Department of Algebra and Geometry, Faculty of Mathematics and Informatics,
Plovdiv University, 236 Bulgaria Blvd, Plovdiv, 4027, Bulgaria, e-mail: mmanev@uni-plovdiv.bg}
  \address[2]{Department of Medical Informatics, Biostatistics and Electronic Educa\-ti\-on,
Faculty of Public Health, Medical University of Plovdiv, 15A Vasil Aprilov Blvd, Plovdiv, 4002, Bulgaria, e-mail: mmanev@meduniversity-plovdiv.bg}
  \title[Natural Connections with Totally Skew-Sym\-met\-ric Torsion \dots]
  {Natural Connections with Totally Skew-Sym\-met\-ric Torsion on Manifolds with
Almost Contact 3-Structure and Metrics of Hermitian-Norden Type}
\begin{abstract}
It is considered a differentiable manifold equipped with a pseudo-Riemannian metric and an almost contact 3-struc\-ture so that an almost contact metric structure and two almost contact B-metric structures
  are generated.
There are introduced the so-called associated Nijenhuis tensors for the studied structures.
It is given a geometric interpretation of the vanishing of these
tensors as a necessary and sufficient condition for the existence of
linear connections with totally skew-symmetric torsions preserving the
structure. An example of a 7-dimensional manifold with connections of the considered
type is given.
\end{abstract}
  \keywords{Almost contact manifold, 3-structure, B-metric, hypercomplex structure, Hermitian metric, Norden metric, associated Nijenhuis tensor, natural connection, totally skew-symmetric torsion.}
\subjclass[2010]{Primary 53C05, 53C15; Secondary 53C50,  53C26}


\newcommand{\MM}{\mathcal{M}}
\newcommand{\R}{\mathbb{R}}
\newcommand{\X}{\mathfrak{X}}
\newcommand{\f}{\varphi}
\newcommand{\ep}{\varepsilon}
\newcommand{\ea}{\varepsilon_\alpha}
\newcommand{\eb}{\varepsilon_\beta}
\newcommand{\eg}{\varepsilon_\gamma}
\newcommand{\n}{\nabla}
\newcommand{\W}{\mathcal{W}}
\newcommand{\al}{\alpha}
\newcommand{\bt}{\beta}
\newcommand{\gm}{\gamma}
\newcommand{\lm}{\lambda}
\newcommand{\ta}{\theta}
\newcommand{\fa}{\varphi_{\alpha}}
\newcommand{\xia}{\xi_{\alpha}}
\newcommand{\etaa}{\eta_{\alpha}}
\newcommand{\om}{\omega}
\newcommand{\D}{\mathrm{d}}
\newcommand{\s}{\mathfrak{S}}
\newcommand{\sx}{\mathop{\mathfrak{S}}\limits_{x,y,z}}
\newcommand{\LL}{\mathcal{L}}
\newcommand{\LLL}{\mathfrak{L}}
\newcommand{\SSS}{\mathcal{S}}
\newcommand{\C}{\mathbb{C}}
\newcommand{\F}{\mathcal{F}}
\newcommand{\ddt}{\textstyle\frac{\D}{\D t}}
\newcommand{\Ja}{J_{\alpha}}
\newcommand{\G}{\mathcal{G}}
\newcommand{\V}{\mathcal{V}}
\newcommand{\pd}{\partial}
\newcommand{\ddx}{\frac{\pd}{\pd x^i}}
\newcommand{\ddy}{\frac{\pd}{\pd y^i}}
\newcommand{\ddu}{\frac{\pd}{\pd u^i}}
\newcommand{\ddv}{\frac{\pd}{\pd v^i}}
\newcommand{\dda}{\frac{\pd}{\pd a}}
\newcommand{\ddb}{\frac{\pd}{\pd b}}
\newcommand{\ddc}{\frac{\pd}{\pd c}}
\newcommand{\Lfa}{(L,\allowbreak\f_{\al},\allowbreak\xi_{\al},\allowbreak\eta_{\al},g)}

\newtheorem{defn}{Definition}
\newtheorem{conv}{Convention}
\newtheorem{rmk}{Remark}
\newtheorem{thm}{Theorem}
\newtheorem{lem}[thm]{Lemma}
\newtheorem{prop}[thm]{Proposition}
\newtheorem{cor}[thm]{Corollary}

\newcommand{\thmref}[1]{The\-o\-rem~\ref{#1}}
\newcommand{\propref}[1]{Pro\-po\-si\-ti\-on~\ref{#1}}

\maketitle

\section{Introduction}\label{intro}

It is known the notion of an \emph{almost contact 3-structure} on a differentiable manifold of dimension $4n+3$ (\cite{Kuo,Udr}). The product of a man\-i\-fold with almost contact 3-structure and a real line admits an \emph{almost hypercomplex structure} (cf. \cite{Kuo,AlMa}).

It is only considered the case of equipping of such a manifold with a Riemannian metric compatible with each of the three structures in the given almost contact 3-structure. This is the so-called \emph{almost contact metric 3-structure}.

In \cite{Man51}, we have introduced a pseudo-Riemannian metric which has another kind of compatibility with the triad of almost contact structures on a manifold with almost contact 3-structure.
The product of this manifold of new type and a real line is a $(4n+4)$-dimensional manifold which admits an almost hypercomplex structure $(J_1,J_2,J_3)$ and a Hermitian-Norden metric (briefly, an HN-metric), i.e. $J_1$ (resp., $J_2$ and $J_3$) acts as an
isometry (resp., act as anti-isometries) with respect to the pseudo-Riemannian metric of neutral signature in each tangent fibre.
This structure is called an \emph{almost hypercomplex HN-metric structure} and it is studied in \cite{GriManDim12,GriMan24,Man28,ManGri32}, etc. The constructed structure on $(4n+3)$-dimensional manifolds we call an \emph{almost contact 3-structure with metrics of Hermitian-Norden type} (briefly, an HN-type).

The goal of the present paper is to introduce an appropriate tensor on a manifold with almost contact 3-structure and metrics of HN-type such that the vanishing of this tensor is a necessary and sufficient condition for existence of linear connections with totally skew-symmetric torsion preserving the almost contact 3-structure and the metric of HN-type.


\begin{conv}
Let $\MM$ be an almost contact manifold and $\MM\times\R$ be the corresponding almost complex manifold.
\begin{enumerate}
  \item We shall use $x$, $y$, $z$, $\dots$ to denote smooth vector fields on $\MM$, i.e. $x, y, z\in \X(\MM)$, or vectors in the tangent space $T_p\MM$ at $p\in \MM$;
  \item We shall use $X$, $Y$, $Z$, $\dots$ to denote smooth vector fields on $\MM\times\R$ or tangent vectors in $T_{\bar{p}}(\MM\times\R)$ at $\bar{p}\in \MM\times\R$.
\end{enumerate}
\end{conv}

\section{Manifolds with almost contact HN-metric 3-structure}\label{sec:1}

Let $(\MM,\f_{\al},\xi_{\al},\eta_{\al})$, $(\al=1,2,3)$ be a manifold
with an almost contact 3-structure, 
i.e. $\MM$
is a $(4n+3)$-dimensional differentiable manifold with three almost
contact structures $(\f_{\al},\xi_{\al},\eta_{\al})$, $(\al=1,2,3)$ consisting of endomorphisms
$\f_{\al}$ of the tangent bundle,  Reeb vector fields $\xi_{\al}$ and their dual contact 1-forms
$\eta_{\al}$ satisfying the following identities:
\begin{equation}\label{str}
\begin{array}{c}
\f_{\al}\circ\f_{\bt} = -\delta_{\al\bt}I + \xi_{\al}\otimes\eta_{\bt}+\epsilon_{\al\bt\gm}\f_{\gm},\\
\f_{\al}\xi_{\bt} = \epsilon_{\al\bt\gm}\xi_{\gm},\quad
\eta_{\al}\circ\f_{\bt}=\epsilon_{\al\bt\gm}\eta_{\gm},\quad \eta_{\al}(\xi_{\bt})=\delta_{\al\bt},
\end{array}
\end{equation}
where $\al,\bt,\gm\in\{1,2,3\}$, $I$ is the identity on the algebra $\X(\MM)$
, $\delta_{\al\bt}$ is the Kronecker delta, $\epsilon_{\al\bt\gm}$ is the Levi-Civita symbol, i.e. either the sign of the permutation $(\al,\bt,\gm)$ of $(1,2,3)$ or 0 if any index is repeated.



In \cite{Man51}, we introduce the following notion.
A pseudo-Riemannian metric $g$ is called a \emph{metric of Hermitian-Norden type} (in short an \emph{HN-metric}) on a manifold with almost contact 3-structure $(\MM,\allowbreak{}\f_{\al},\allowbreak{}\xi_{\al},\allowbreak{}\eta_{\al})$, if it satisfies the identities
\begin{equation}\label{HN-met}
  g(\f_{\al}x,\f_{\al}y)=\ea g(x,y)+\eta_{\al}(x)\eta_{\al}(y),\quad \al=1,2,3
\end{equation}
for some cyclic permutation $(\ep_1,\ep_2,\ep_3)$ of $(1,-1,-1)$.
We suppose for the sake of definiteness that
\[
 \ea=
\begin{cases}
\begin{array}{ll}
1, \quad & \al=1;\\
-1, \quad & \al=2,3.
\end{array}
\end{cases}
\]
Then, $(\f_{\al},\xi_{\al},\allowbreak{}\eta_{\al},g)$ we call an \emph{almost contact HN-metric 3-structure}.

Actually, this 3-structure consists of an almost contact metric structure for $\al=1$ and two almost contact B-metric structures for $\al=2$ and $\al=3$. Then $g$ is a compatible metric with respect to $(\f_{1},\xi_{1},\eta_{1},g)$ and $g$ is a B-metric with respect to $(\f_{2},\xi_{2},\eta_{2},g)$ and $(\f_{3},\xi_{3},\eta_{3},g)$ on $\MM$.


The fundamental tensors of a manifold with almost contact HN-metric 3-struc\-ture are the three
$(0,3)$-tensors determined by
\begin{equation}\label{F}
F_\al (x,y,z)=g\bigl( \left( \n_x \f_\al
\right)y,z\bigr),\qquad
\al=1,2,3,
\end{equation}
where $\n$ is the Levi-Civita connection generated by $g$.
They have the following basic properties caused by the structures
\begin{equation}\label{Fa-prop}
\begin{array}{l}
  F_{\al}(x,y,z)=-\ea F_{\al}(x,z,y)\\
  \phantom{F_{\al}(x,y,z)}
  =-\ea F_{\al}(x,\f_{\al}y,\f_{\al}z)+F_{\al}(x,\xi_{\al},z)\,
  \eta_{\al}(y)\\
  \phantom{F_{\al}(x,y,z)=-\ea F_{\al}(x,\f_{\al}y,\f_{\al}z)}
    +F_{\al}(x,y,\xi_{\al})\,\eta_{\al}(z).
\end{array}
\end{equation}

Bearing in mind the following consequence of \eqref{HN-met}
\begin{equation}\label{eta-g}
\eta_{\al}=-\ea \xi_{\al} \lrcorner\, g,
\end{equation}
as well as \eqref{F}, we have the following relations
\begin{equation}\label{Fetaxia}
  F_\al(x,\f_\al y,\xi_\al)=-\ea \left(\n_x\eta_\al \right)(y)=g\left(\n_x\xi_\al,y\right).
\end{equation}

Let $\LLL_{\xi_\al}g$ denote the Lie derivative of $g$ along $\xi_\al$. We have the following relations using \eqref{HN-met}
\begin{equation}\label{Lie-der}
\begin{array}{l}
  (\LLL_{\xi_\al}g)(x,y)=g(\n_x \xi_\al,y)+g(x,\n_y \xi_\al)\\
  \phantom{(\LLL_{\xi_\al}g)(x,y)}
  =-\ea \bigl(\left(\n_x\eta_\al\right)(y)+\left(\n_y\eta_\al\right)(x)\bigr).
\end{array}
\end{equation}


We use the known classifications of the almost contact metric manifolds and the almost contact B-metric manifolds in terms of $F_\al$ given in \cite{AlGa} and \cite{GaMiGr}, respectively. The former classification is relevant for $\al=1$ and contains 12 basic classes $\W_i$ $(i=1,2,\dots,12)$, whereas  the latter one consists of 11 basic classes $\F_i$ $(i=1,2,\dots,11)$ and it applies for $\al=2$ or $\al=3$.


\section{Associated Nijenhuis tensors on manifolds with almost contact HN-metric 3-structure}

As it is known, for each $\al\in\{1,2,3\}$ the Nijenhuis tensor $N_{\al}$ of an almost contact manifold $(\MM,\f_\al,\allowbreak{}\xi_\al,\eta_\al)$ is defined by:
\[
  N_{\al}=[\f_{\al},\f_{\al}]+\xi_{\al}\otimes\D\eta_{\al},
\]
where we have $[\f_{\al},\f_{\al}](x,y)=\f_{\al}^2[x,y]+[\f_{\al}x,\f_{\al}y]-\f_{\al}[\f_{\al}x,y]-\f_{\al}[x,\f_{\al}y]$ and $\D\eta_{\al}(x,y)=\left(\n_x\eta_{\al}\right)y-\left(\n_y\eta_{\al}\right)x$.
Moreover, let us recall, if two almost contact structures in an almost contact 3-structure are normal, then the third one is also normal \cite{Kuo,YaAk}.


Let us consider the symmetric braces $\{x,y \}$ introduced by the following equalities for a pseudo-Riemannian metric $g$
\begin{equation}\label{braces}
\begin{array}{l}
g(\{x,y\},z)=g(\nabla_xy+\nabla_yx,z)\\
\phantom{g(\{x,y\},z)}
=xg(y,z)+yg(x,z)-zg(x,y)-g([y,z],x)+g([z,x],y).
\end{array}
\end{equation}

For the almost contact structure $(\f_1,\xi_1,\allowbreak{}\eta_1)$ and the metric $g$, we define a symmetric tensor $\widehat N_1$ by
\begin{equation}\label{hatN1}
\widehat N_1=\{\f_1,\f_1\}-\xi_1\otimes\LLL_{\xi_1}g,
\end{equation}
where 
$\{\f_1 ,\f_1\}$
is the symmetric tensor field of type $(1,2)$ given by
\begin{equation}\label{{f1f1}}
\{\f_1 ,\f_1\}(x,y)=\{\f_1 x,\f_1 y\}+(\f_1)^2\{x,y\}-\f_1\{\f_1 x,y\}-\f_1\{x,\f_1
y\}.
\end{equation}
We call $\widehat N_1$ an  \emph{as\-so\-ci\-ated {Nijenhuis} tensor} on $(\MM,\f_1,\xi_1,\allowbreak{}\eta_1,g)$.

The corresponding tensors of type $(0,3)$ for $N_1$ and $\widehat N_1$ are given
by $N_1(x,y,z)=g(N_1(x,y),z)$ and $\widehat N_1(x,y,z)=g(\widehat
N_1(x,y),z)$, respectively.

By direct consequences of the definitions, we get that
$N_1$, $\widehat N_1$ and $\LLL_{\xi_1}g$ are expressed in
terms of $F_1$ as follows:
\begin{gather}
\begin{array}{l}
N_1(x,y,z)=
F_1(\f_1 x,y,z)+ F_1(x,y,\f_1 z)+F_1(x,\f_1 y,\xi_1)\,\eta_1(z)
\label{N1=F1}\\
\phantom{N_1(x,y,z)}
-F_1(\f_1 y,x,z)- F_1(y,x,\f_1 z)-F_1(y,\f_1x,\xi_1)\,\eta_1(z),
\end{array}\\
\begin{array}{l}
\widehat N_1(x,y,z)=
F_1(\f_1 x,y,z)+ F_1(x,y,\f_1 z)+F_1(x,\f_1 y,\xi_1)\,\eta_1(z)\\
\phantom{\widehat N_1(x,y,z)}
+F_1(\f_1 y,x,z)+ F_1(y,x,\f_1z)+F_1(y,\f_1x,\xi_1)\,\eta_1(z),\label{N1hat=F1}
\end{array}\\
\left(\LLL_{\xi_1}g\right)(x,y)=F_1(x,\f_1y,\xi_1)+F_1(y,\f_1x,\xi_1).
\label{Lxi1g=F1}
\end{gather}


%
In \cite{ManIv36}, it is defined the \emph{as\-so\-ci\-ated
Nijenhuis} tensor $\widehat N_2$ for the almost contact B-metric structure $(\f_2,\xi_2,\allowbreak{}\eta_2)$
by
\begin{equation}\label{hatN2}
\widehat N_2=\{\f_2,\f_2\}+\xi_2\otimes\LLL_{\xi_2}g,
\end{equation}
where 
$\{\f_2 ,\f_2\}$
is the symmetric tensor field of type $(1,2)$ defined as in \eqref{{f1f1}}.

\begin{prop}\label{prop:hatN=0=>Kill}
  For the almost contact B-metric manifold $(\MM,\f_2,\xi_2,\eta_2,g)$, the vanishing of $\widehat N_2$ implies that $\xi_2$ is Killing.
\end{prop}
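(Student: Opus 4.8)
The plan is to rewrite $\widehat N_2$, in the spirit of formula \eqref{N1hat=F1} for $\widehat N_1$, as a $(0,3)$-tensor built from the fundamental tensor $F_2$ of \eqref{F}, and then to read off $\LLL_{\xi_2}g=0$ by substituting the Reeb vector field $\xi_2$ into the resulting identity.

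First I would compute the $(0,3)$-form of $\widehat N_2$. Since $g$ is non-degenerate, \eqref{braces} gives $\{x,y\}=\n_xy+\n_yx$, hence the symmetric square in \eqref{hatN2} equals $\{\f_2,\f_2\}(x,y)=(\n_{\f_2x}\f_2)y+(\n_{\f_2y}\f_2)x-\f_2(\n_x\f_2)y-\f_2(\n_y\f_2)x$. Using that $\f_2$ is $g$-symmetric and that $g(\xi_2,\cdot\,)=\eta_2$ — both consequences of \eqref{HN-met} and \eqref{eta-g} with $\al=2$ (here $\ep_2=-1$) — this yields the analogue of \eqref{N1hat=F1}, namely
\[
\widehat N_2(x,y,z)=F_2(\f_2x,y,z)+F_2(\f_2y,x,z)-F_2(x,y,\f_2z)-F_2(y,x,\f_2z)+\bigl(\LLL_{\xi_2}g\bigr)(x,y)\,\eta_2(z),
\]
while \eqref{Fetaxia} and \eqref{Lie-der} give $\bigl(\LLL_{\xi_2}g\bigr)(x,y)=F_2(x,\f_2y,\xi_2)+F_2(y,\f_2x,\xi_2)$; the sign changes against \eqref{N1hat=F1} are forced by $\ep_2=-1$ and by $\f_2$ being $g$-symmetric rather than $g$-skew. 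Writing $S(x,y)=g(\n_x\xi_2,y)$, so that $\bigl(\LLL_{\xi_2}g\bigr)(x,y)=S(x,y)+S(y,x)$, the identities \eqref{Fetaxia}, \eqref{Fa-prop} together with $\f_2\xi_2=0$, $\eta_2\circ\f_2=0$ from \eqref{str} also give $F_2(x,\xi_2,y)=F_2(x,y,\xi_2)=-S(x,\f_2y)$ and $F_2(x,\xi_2,\xi_2)=0$.

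Next I would feed in $\xi_2$. Putting $z=\xi_2$ (so $\f_2z=0$, $\eta_2(z)=1$) the displayed identity collapses, and after inserting $F_2(\f_2x,y,\xi_2)=-S(\f_2x,\f_2y)$ it becomes
\[
\bigl(\LLL_{\xi_2}g\bigr)(x,y)=S(\f_2x,\f_2y)+S(\f_2y,\f_2x)=\bigl(\LLL_{\xi_2}g\bigr)(\f_2x,\f_2y).
\]
Putting instead the middle slot equal to $\xi_2$, i.e. using $\widehat N_2(x,\xi_2,y)=0$: one term drops since $\f_2\xi_2=0$, and specialising $y=\xi_2$ (equivalently, evaluating $\widehat N_2$ on $\xi_2,\xi_2$) forces $\n_{\xi_2}\xi_2=0$. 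With this the identity reduces to
\[
S(\f_2x,\f_2y)+S(x,y)+F_2(\xi_2,x,\f_2y)=0.
\]

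The hard part is the term $F_2(\xi_2,x,\f_2y)$, which encodes $\n_{\xi_2}\f_2$ and is not controlled by $\n\xi_2$. The key observation is that it is skew in $x$ and $y$: because $\n_{\xi_2}\xi_2=0$ makes $F_2(\xi_2,\xi_2,\cdot\,)=0$, property \eqref{Fa-prop} (with $\al=2$) gives $F_2(\xi_2,a,b)=F_2(\xi_2,\f_2a,\f_2b)$, and iterating this together with $\f_2^2b=-b+\eta_2(b)\xi_2$ produces $F_2(\xi_2,x,\f_2y)=-F_2(\xi_2,y,\f_2x)$. Hence symmetrising the last displayed identity in $x$ and $y$ cancels these terms and yields $\bigl(\LLL_{\xi_2}g\bigr)(\f_2x,\f_2y)+\bigl(\LLL_{\xi_2}g\bigr)(x,y)=0$; comparing with $\bigl(\LLL_{\xi_2}g\bigr)(\f_2x,\f_2y)=\bigl(\LLL_{\xi_2}g\bigr)(x,y)$ from the previous step forces $\LLL_{\xi_2}g=0$, i.e. $\xi_2$ is Killing. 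Besides this skew-symmetry step, the only care needed is in the sign bookkeeping coming from $\ep_2=-1$; all else is substitution of $\xi_2$ together with \eqref{str}, \eqref{HN-met}, \eqref{Fetaxia} and \eqref{Fa-prop}.
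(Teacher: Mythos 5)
Your argument is correct; I checked the individual steps (the $(0,3)$-expression of $\widehat N_2$ through $F_2$, the identities $F_2(x,y,\xi_2)=-g(\n_x\xi_2,\f_2y)$ and $F_2(x,\xi_2,\xi_2)=0$, the deduction of $\n_{\xi_2}\xi_2=0$ from $\widehat N_2(\xi_2,\xi_2,\cdot)=0$, and the skew-symmetry of $F_2(\xi_2,x,\f_2y)$ in $x,y$ obtained from \eqref{Fa-prop} once $\n_{\xi_2}\xi_2=0$) and they all hold. Your route is, however, genuinely different in execution from the paper's. The paper quotes from \cite{IvManMan45} the expression of $F_2$ through $N_2$ and $\widehat N_2$ and from \cite{ManIv36} the expression of $\widehat N_2$ through $F_2$, and combines them with \eqref{Fetaxia} and \eqref{Lie-der} into a single \emph{unconditional} identity writing $(\LLL_{\xi_2}g)(x,y)$ as an explicit linear combination of values of $\widehat N_2$ at arguments built from $\f_2x$, $\f_2y$, $\xi_2$; the proposition is then immediate, and the identity itself is a stronger, quantitative statement. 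You instead derive the $\widehat N_2$--$F_2$ relation directly from \eqref{hatN2}, \eqref{braces} and the structure equations, assume $\widehat N_2=0$ from the start, and chain three consequences: first $\n_{\xi_2}\xi_2=0$, then $(\LLL_{\xi_2}g)(\f_2x,\f_2y)=(\LLL_{\xi_2}g)(x,y)$ from the slot $z=\xi_2$, and finally $(\LLL_{\xi_2}g)(\f_2x,\f_2y)=-(\LLL_{\xi_2}g)(x,y)$ by symmetrising the relation coming from the middle slot $\xi_2$, the cancellation resting on your skew-symmetry observation. What your approach buys is self-containedness (no appeal to the inverse formula for $F_2$ in terms of $N_2$ and $\widehat N_2$); what the paper's buys is the explicit formula for $\LLL_{\xi_2}g$ in terms of $\widehat N_2$, valid without any vanishing hypothesis.
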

\begin{proof}
It is known the formula for $F_2$ in terms of $N_2$ and $\widehat N_2$ from \cite{IvManMan45}, whereas the expression of $\widehat N_2$ by $F_2$ is given in \cite{ManIv36}:
\[
\begin{split}
F_2(x,y,z)&=-\frac14\bigl\{N_2(\f_2 x,y,z)+N_2(\f_2 x,z,y)\\
&\phantom{=-\frac14\bigl\{ }+\widehat N_2(\f_2 x,y,z)+\widehat N_2(\f_2 x,z,y)\bigr\}\\
&\phantom{=\ }+\frac12\eta_2(x)\bigl\{N_2(\xi_2,y,\f_2 z)+\widehat
N_2(\xi_2,y,\f_2 z)\\
&\phantom{=\ \,+\frac12\eta_2(x)\bigl\{N_2(\xi_2,y,\f_2 z)}
+\eta_2(z)\widehat N_2(\xi_2,\xi_2,\f_2 y)\bigr\},
\\
\widehat N_2(x,y,z)&=
F_2(\f_2 x,y,z)- F_2(x,y,\f_2 z)+F_2(x,\f_2 y,\xi_2)\,\eta_2(z)\\
&
\, +F_2(\f_2 y,x,z)- F_2(y,x,\f_2z)+F_2(y,\f_2x,\xi_2)\,\eta_2(z).
\end{split}
\]
By the latter equalities, \eqref{Fetaxia} and \eqref{Lie-der}, we obtain the following relation
\begin{equation*}\label{Lxig=hatN}
\begin{split}
(\LLL_{\xi_2}g)(x,y)&=-\frac12\bigl\{\widehat N_2(\f_2 x,\f_2 y,\xi_2)+\widehat N_2(\xi_2,\f_2 x,\f_2 y)
+\widehat N_2(\xi_2,\f_2 y,\f_2 x)\bigr\}\\
&\phantom{=-\frac12\bigl\{ }
+\eta_2(x)\widehat N_2(\xi_2,\xi_2,y)+\eta_2(y)\widehat N_2(\xi_2,\xi_2,x)\bigr\},
\end{split}
\end{equation*}
which yields the statement.
\end{proof}

Let us remark that a similar statement of \propref{prop:hatN=0=>Kill} for an almost contact metric manifold is not true.

Let the manifold $\MM$ be equipped with an almost contact 3-structure $(\f_{\al},\xi_{\al},\allowbreak{}\eta_{\al})$, $(\al=1,2,3)$
and then we consider the product $\MM\times\R$. 
Let $X$ be a vector field on $\MM\times\R$ which is presented by a pair $\left(  x, a\ddt\right)$, where $x$ is a tangent vector field on $\MM$, $t$ is the coordinate on $\R$ and $a$ is a differentiable function on $\MM\times\R$ \cite[Sect. 6.1]{Blair}.
The almost complex structures $J_{\al}$, $(\al=1,2,3)$ are defined on the manifold $\MM\times\R$ by
\begin{equation}\label{JaX}
\begin{array}{c}
J_{\al}X=J_{\al}\left(x, a\ddt\right)=\left(
  \f_{\al}x-a\xi_{\al},
  \eta_{\al}(x)\ddt
\right).
\end{array}
\end{equation}
In such a way, in \cite{YaIsKo}, it is defined an almost hypercomplex structure on $\MM\times\R$ when $\MM$ has an almost contact 3-structure.

Moreover, we equip $\MM\times\R$ with the product metric $G=g-\D t^2$. By virtue of \eqref{JaX}, \eqref{HN-met} and its consequence $g(\xia,\xia)=-\ea$, we obtain
\[
G\bigl(\Ja(x,a\ddt),\Ja(y,b\ddt)\bigr)=\ea G\bigl((x,a\ddt),(y,b\ddt)\bigr),
\]
i.e. the manifold $\MM\times\R$ has an almost hypercomplex HN-metric structure $(\Ja,G)$, $(\al=1,2,3)$.

We introduce the braces $\{X,Y\}$ for the vector fields $X=\bigl(x,a\ddt\bigr)$ and $Y=\bigl(y,b\ddt\bigr)$ on $\MM\times\R$ defined by
\begin{equation}\label{sym-skobi}
\begin{array}{c}
\{X,Y\}=\bigl(\{x,y\},(x(b)+y(a))\ddt\bigr),
\end{array}
\end{equation}
where $\{x,y\}$ are given in \eqref{braces}. Obviously, the braces are symmetric. 

It is known from \cite{KoNo}, the Nijenhuis tensor of two endomorphisms $J_{\al}$ and $J_{\bt}$ has the following form:
\[
\begin{split}
2[J_{\al},J_{\bt}](X,Y)&=[J_{\al}X,J_{\bt}Y]-J_{\al}[J_{\bt}X,Y]-J_{\al}[X,J_{\bt}Y]
\\
&
+[J_{\bt}X,J_{\al}Y]-J_{\bt}[J_{\al}X,Y]-J_{\bt}[X,J_{\al}Y]\\
&+(J_{\al}J_{\bt}+J_{\bt}J_{\al})[X,Y].
\end{split}
\]
Moreover, the Nijenhuis tensor of an almost complex structure $J_{\al}\equiv J_{\bt}$ is presented by
\[
[J_{\al},J_{\al}](X,Y)=[J_{\al}X,J_{\al}Y]-J_{\al}[J_{\al}X,Y]
-J_{\al}[X,J_{\al}Y]-[X,Y].
\]

Analogously of the last two equalities, using the braces \eqref{sym-skobi} instead of the Lie brackets, we define consequently the associated Nijenhuis tensors in the two respective cases as follows:
\[
\begin{split}
2\{J_{\al},J_{\bt}\}(X,Y)&=\{J_{\al}X,J_{\bt}Y\}-J_{\al}\{J_{\bt}X,Y\}-J_{\al}\{X,J_{\bt}Y\}\\
&
+\{J_{\bt}X,J_{\al}Y\}-J_{\bt}\{J_{\al}X,Y\}-J_{\bt}\{X,J_{\al}Y\}\\
&
+(J_{\al}J_{\bt}+J_{\bt}J_{\al})\{X,Y\},\nonumber
\end{split}
\]
\begin{equation}\label{JaJa}
\begin{array}{l}
\{J_{\al},J_{\al}\}(X,Y)
=\{J_{\al}X,J_{\al}Y\}-J_{\al}\{J_{\al}X,Y\}-J_{\al}\{X,J_{\al}Y\}\\
\phantom{\{J_{\al},J_{\bt}\}(X,Y)=}
-\{X,Y\}.
\end{array}
\end{equation}
The latter tensor is given in \cite{Man48} and coincides with the tensor $\tilde N$ introduced in \cite{GaBo} by an equivalent equality of \eqref{JaJa}.

According to \cite{GrHe}, the $\G_1$-manifolds are almost Hermitian manifolds whose corresponding Nijenhuis (0,3)-tensor by the Hermitian metric is a 3-form.
This condition is equivalent to the vanishing of the associated Nijenhuis tensor, according to \cite{Man52}.

As it is known from \cite{GaBo}, the class 
of the quasi-K\"ahler manifolds with Norden metric is the only basic class of the considered manifolds with non-integrable almost complex structure $J$, because $[J,J]$ is non-zero there. Moreover, this class is determined by the condition $\{J,J\} = 0$.

In \cite{Man52}, it is proven 
the following
\begin{prop}\label{prop:hatNJ}
Let $(J_1,J_2,J_3)$ be an almost hypercomplex structure and $G$ is a pseudo-Riemannian metric on the almost hypercomplex manifold. If two of its six associated Nijenhuis tensors
$\{J_1,J_1\}$, $\{J_2,J_2\}$, $\{J_3,J_3\}$, $\{J_1,J_2\}$, $\{J_1,J_3\}$, $\{J_2,J_3\}$
vanish, then the others also vanish.
\end{prop}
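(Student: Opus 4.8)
The plan is to reduce the statement to a fixed linear system relating the six associated Nijenhuis tensors and then to finish by a short case analysis. First I would pass to the fundamental tensors of the almost hypercomplex HN-metric structure on $\MM\times\R$. Put $F^G_\al(X,Y,Z)=G\bigl((\n^G_X J_\al)Y,Z\bigr)$, where $\n^G$ is the Levi-Civita connection of the product metric $G=g-\D t^2$. Since $\MM\times\R$ carries a genuine almost complex, not an almost contact, structure, there is no $\xi\otimes\LLL_\xi G$ correction term here (contrast \eqref{hatN1}, \eqref{hatN2}), and the braces \eqref{sym-skobi} coincide with the symmetrised covariant derivative $\n^G_X Y+\n^G_Y X$. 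Consequently each of $\{J_\al,J_\al\}$ from \eqref{JaJa} and each mixed tensor $\{J_\al,J_\bt\}$ (defined by the analogous unlabelled formula) admits an explicit expression in terms of $F^G_\al$ and $F^G_\bt$, parallel in form to \eqref{N1hat=F1} in the Hermitian case $\al=1$ and to the expression of $\widehat N_2$ used in the proof of \propref{prop:hatN=0=>Kill} in the Norden case $\al=2,3$; in particular the vanishing of $\{J_\al,J_\bt\}$ amounts to the vanishing of a definite symmetrisation of $F^G_\al$ and $F^G_\bt$ in prescribed slots.

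Next I would use the hypercomplex relations $J_\al J_\bt=\epsilon_{\al\bt\gm}J_\gm-\delta_{\al\bt}I$. As $\n^G$ is a derivation, $\n^G_X(J_\al J_\bt)=(\n^G_X J_\al)\circ J_\bt+J_\al\circ(\n^G_X J_\bt)$, which for $\al\neq\bt$ reads $\n^G_X J_\gm=\epsilon_{\al\bt\gm}\bigl\{(\n^G_X J_\al)\circ J_\bt+J_\al\circ(\n^G_X J_\bt)\bigr\}$; lowering indices with $G$ and using that $J_1$ is a $G$-isometry while $J_2,J_3$ are $G$-anti-isometries turns this into a fixed system of linear identities expressing each of $F^G_1,F^G_2,F^G_3$ through the other two (with arguments pre-composed by some $J_\bullet$ and with the expected $\pm$ signs). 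Feeding these into the formulas of the first step converts them into linear relations among the six tensors $\{J_1,J_1\}$, $\{J_2,J_2\}$, $\{J_3,J_3\}$, $\{J_1,J_2\}$, $\{J_1,J_3\}$, $\{J_2,J_3\}$. Equivalently, and this is the form I would actually record because it sidesteps the sign bookkeeping: the Nijenhuis construction is bi-additive and $\R$-linear in its two endomorphism slots, so from $\{J_\al+ J_\bt,J_\al+ J_\bt\}=\{J_\al,J_\al\}+\{J_\bt,J_\bt\}+2\{J_\al,J_\bt\}$ and its sign variant, together with $(J_\al\pm J_\bt)^2=-2I$ and $J_\al J_\bt+J_\bt J_\al=0$, one obtains each diagonal tensor $\{J_\gm,J_\gm\}$ as an explicit $J_\bullet$-linear combination of the two remaining diagonal ones and $\{J_\al,J_\bt\}$, and symmetrically each mixed tensor in terms of the two diagonal ones. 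All of this rests only on \eqref{str}-type algebra and on the $\R$-bilinearity of the braces \eqref{braces}, hence it holds verbatim for the symmetric braces exactly as for the Lie bracket — the present statement being the hypercomplex analogue of the classical fact, recalled above, that normality of two of the three structures forces normality of the third.

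Then I would finish by a short case analysis on which two of the six tensors are assumed to vanish. Up to the sign-permutation symmetry of the triple $(J_1,J_2,J_3)$ only three configurations are essentially distinct — both vanishing tensors diagonal, one diagonal and one mixed, both mixed — and in each the identities just obtained, applied cyclically over $(\al,\bt,\gm)$, propagate the vanishing to the remaining four. I expect the real work to sit in the second step: arranging the expansion so that the numerous $F^G$-terms collapse onto the stated relations, checking that antisymmetry of the underlying pairing is never invoked — which is precisely what lets the argument for $[J_\al,J_\bt]$ transfer to the symmetric tensors $\{J_\al,J_\bt\}$ — and keeping the signs coming from the Hermitian-versus-Norden behaviour of $J_1$ against $J_2,J_3$ consistent. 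The reduction in the first step and the case analysis at the end are routine.
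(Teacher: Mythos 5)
You should first note that the paper does not prove \propref{prop:hatNJ} at all: it is imported from \cite{Man52}, so the benchmark is that reference's computational proof, which consists precisely in deriving explicit interrelations among the six tensors. Your overall direction (write each tensor in terms of $\n^G J_\al$, use $J_\gm=\ep_{\al\bt\gm}J_\al J_\bt$ and the Leibniz rule, then do a case analysis) is the right kind of argument, but there is a genuine gap at exactly the decisive point: the ``fixed system of linear identities'' among the six tensors is never derived, and the shortcut you offer in its place does not deliver it. The polarization identity $\{J_\al+J_\bt,J_\al+J_\bt\}=\{J_\al,J_\al\}+\{J_\bt,J_\bt\}+2\{J_\al,J_\bt\}$ (with the $+A^2\{X,Y\}$ normalisation of the last term) is a tautology of bilinearity: its left-hand side equals $2\{\tilde J,\tilde J\}$ for the almost complex structure $\tilde J=\tfrac{1}{\sqrt2}(J_\al+J_\bt)$, which is \emph{not} one of the six tensors, so the identity introduces a new unknown rather than closing the system. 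Bilinearity plus anticommutation alone cannot produce ``each mixed tensor in terms of the two diagonal ones''; if it could, the statement would hold for any anticommuting pair of complex structures without ever using $J_\gm=J_\al J_\bt$ through the connection, and the proposition would be vacuous. The actual content is the computation you explicitly defer: starting from $\{J,J\}(X,Y)=(\n^G_{JX}J)Y+(\n^G_{JY}J)X-J(\n^G_XJ)Y-J(\n^G_YJ)X$ and its mixed analogue, substitute $\n^G J_\gm=(\n^G J_\al)J_\bt+J_\al(\n^G J_\bt)$ and organise the result into identities whose precise shape (which arguments are pre-composed with which $J_\bullet$, which combinations can be inverted) is exactly what makes each vanishing configuration propagate. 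Until those identities are written down and verified, the concluding case analysis has nothing to run on, so the proposal is a plan rather than a proof.

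Two smaller points. First, the proposition assumes only a pseudo-Riemannian $G$, with no Hermitian--Norden compatibility; your step ``lowering indices with $G$ and using that $J_1$ is a $G$-isometry while $J_2,J_3$ are anti-isometries'' is not available at that level of generality, and it is also unnecessary, since all six tensors are expressible through $\n^G J_\al$ as $(1,2)$-tensors without touching the metric compatibility. Second, under the sign-permutation symmetry of the triple the pairs of vanishing tensors fall into four, not three, essentially distinct configurations: both diagonal; a diagonal one together with a mixed one sharing its index (e.g. $\{J_1,J_1\}$, $\{J_1,J_2\}$); a diagonal one together with the disjoint mixed one (e.g. $\{J_3,J_3\}$, $\{J_1,J_2\}$); and two mixed ones. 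The two middle cases require genuinely different identities, so the case analysis must be set up accordingly.
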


We seek to express in terms of the structure tensors of $(\f_{\al},\xi_{\al},\eta_{\al})$ a necessary and sufficient condition for $\{J_{\al},J_{\al}\}=0$.

For the structure $(\fa,\xia,\etaa,g)$, $\al\in\{1,2,3\}$, let us define the following four tensors of type (1,2), (0,2), (1,1), (0,1), respectively:
\begin{equation}\label{hatN1234}
\begin{array}{l}
\widehat N_{\al}^{(1)}(x,y)=
\{\f_{\al},\f_{\al}\}(x,y)-\ea\left(\LLL_{\xi_{\al}}g\right)(x,y)\cdot\xi_{\al},\\
\widehat N_{\al}^{(2)}(x,y)=
-\ea\left(\LLL_{\xi_{\al}}g\right)(\f_{\al}x,y)
-\ea\left(\LLL_{\xi_{\al}}g\right)(x,\f_{\al}y),
\\
\widehat N_{\al}^{(3)}x=
\{\f_{\al},\f_{\al}\}(\fa x,\xia)+\left(\LLL_{\xi_{\al}}\eta_{\al}\right)(\f_{\al}x)\cdot\xi_{\al}
+2\eta_{\al}(x)\f_{\al}\n_{\xi_{\al}}\xi_{\al},\\
\widehat N_{\al}^{(4)}(x)=
-\left(\LLL_{\xi_{\al}}\eta_{\al}\right)(x).
\end{array}
\end{equation}
\begin{prop}\label{prop:JaJa=0_hatN1234=0}
The associated Nijenhuis tensor $\{J_{\al},J_{\al}\}$ of an almost complex structure $\Ja$ for some $(\MM\times\R,\Ja,G)$, $\al\in\{1,2,3\}$, vanishes
  if and only if the four tensors $\widehat N_{\al}^{(1)}$, $\widehat N_{\al}^{(2)}$, $\widehat N_{\al}^{(3)}$, $\widehat N_{\al}^{(4)}$ for the structure $(\fa,\xia,\etaa,g)$ vanish.
\end{prop}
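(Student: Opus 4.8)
The plan is to unwind the definition \eqref{JaJa} of the associated Nijenhuis tensor $\{\Ja,\Ja\}$ and to recover from the result the four tensors listed in \eqref{hatN1234}. Like the ordinary Nijenhuis tensor, $\{\Ja,\Ja\}$ is a genuine tensor field of type $(1,2)$ on $\MM\times\R$ (the non-tensorial contributions of the braces cancel in \eqref{JaJa}), so its value at a point depends only on the pointwise values of its arguments; hence it suffices to evaluate it on vector fields of the form $X=\bigl(x,a\ddt\bigr)$, $Y=\bigl(y,b\ddt\bigr)$ with $x,y\in\X(\MM)$ and $a,b$ real constants. Substituting \eqref{JaX} and \eqref{sym-skobi} into \eqref{JaJa}, and using the structure identities \eqref{str} with $\bt=\al$ (so $\fa^2=-I+\xia\otimes\etaa$, $\fa\xia=0$, $\etaa\circ\fa=0$) together with \eqref{eta-g} and its consequences $g(\xia,\xia)=-\ea$ and $\bigl(\n_x\etaa\bigr)(\xia)=0$, one rewrites $\{\Ja,\Ja\}(X,Y)$ as a pair whose first slot is a vector field on $\MM$ and whose second slot is a function times $\ddt$; the symmetric braces on $\MM$ may be used in the form $\{x,y\}=\n_xy+\n_yx$, which is an equivalent reading of \eqref{braces} since $g$ is non-degenerate.

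I expect the outcome to be
\begin{equation*}
\begin{split}
\{\Ja,\Ja\}(X,Y)=\Bigl(\,&\widehat N_{\al}^{(1)}(x,y)+a\,\widehat N_{\al}^{(3)}y+b\,\widehat N_{\al}^{(3)}x+2ab\,\n_{\xia}\xia,\\
&\bigl[\widehat N_{\al}^{(2)}(x,y)+a\,\widehat N_{\al}^{(4)}(y)+b\,\widehat N_{\al}^{(4)}(x)\bigr]\ddt\,\Bigr),
\end{split}
\end{equation*}
which I would establish by grouping terms according to their degree in $(a,b)$. For the part free of $a,b$: in the first slot it collapses, after using the definition of the $(1,2)$-tensor $\{\fa,\fa\}$, the relation $-\fa^2=I-\xia\otimes\etaa$ and \eqref{Lie-der}, to $\{\fa,\fa\}(x,y)-\ea\bigl(\LLL_{\xia}g\bigr)(x,y)\cdot\xia=\widehat N_{\al}^{(1)}(x,y)$; in the second slot it collapses, by \eqref{Lie-der} and $\etaa\circ\fa=0$, to $\widehat N_{\al}^{(2)}(x,y)$.

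The terms linear in $a$ (and, symmetrically, in $b$) hinge on the auxiliary identity
\[
\widehat N_{\al}^{(3)}u=\fa\{u,\xia\}-\{\fa u,\xia\},
\]
which I would verify by expanding both sides with $\{u,v\}=\n_uv+\n_vu$ and \eqref{str}; here the precise form of $\widehat N_{\al}^{(3)}$ in \eqref{hatN1234}, in particular its summand $2\etaa(u)\fa\n_{\xia}\xia$, is exactly what makes the two sides agree. In the second slot the linear terms give $-\bigl(\LLL_{\xia}\etaa\bigr)(y)=\widehat N_{\al}^{(4)}(y)$, where $\bigl(\n_x\etaa\bigr)(\xia)=0$ is used to pass between the covariant and the Lie derivative. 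Finally, the only term bilinear in $a,b$ is $2ab\,\n_{\xia}\xia$, which by \eqref{str} equals $-ab\,\fa\widehat N_{\al}^{(3)}\xia$, hence is controlled by $\widehat N_{\al}^{(3)}$ as well.

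Granting this decomposition, the equivalence is immediate. If all four tensors vanish, then $\widehat N_{\al}^{(3)}\xia=2\fa\n_{\xia}\xia=0$, and applying $\fa$ together with $\fa^2\n_{\xia}\xia=-\n_{\xia}\xia$ (valid since $\etaa(\n_{\xia}\xia)=0$) yields $\n_{\xia}\xia=0$; hence every summand on the right-hand side vanishes and $\{\Ja,\Ja\}=0$. Conversely, if $\{\Ja,\Ja\}=0$, then evaluating on $X=(x,0)$, $Y=(y,0)$ forces $\widehat N_{\al}^{(1)}=\widehat N_{\al}^{(2)}=0$, and evaluating on $X=(0,\ddt)$, $Y=(y,0)$ forces $\widehat N_{\al}^{(3)}=\widehat N_{\al}^{(4)}=0$. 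I expect the main obstacle to be this middle step — establishing the identity for $\widehat N_{\al}^{(3)}$ and keeping track of the numerous $\xia$-valued terms which cancel only because of $\fa\xia=0$, $\etaa\circ\fa=0$ and $\bigl(\n\etaa\bigr)(\xia)=0$; the remaining manipulations are routine.
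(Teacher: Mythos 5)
Your proposal is correct and takes essentially the same route as the paper: evaluate $\{J_{\al},J_{\al}\}$ via \eqref{JaX}, \eqref{sym-skobi}, \eqref{JaJa} on pairs built from $(x,0\ddt)$ and $(o,\ddt)$, identify the two slots with $\widehat N_{\al}^{(1)},\widehat N_{\al}^{(2)}$ resp. $\widehat N_{\al}^{(3)},\widehat N_{\al}^{(4)}$ (your auxiliary identity $\widehat N_{\al}^{(3)}u=\fa\{u,\xia\}-\{\fa u,\xia\}$ is exactly what the paper's second displayed computation amounts to), and conclude the equivalence by tensoriality. The only refinement is that you also track the bilinear term $2ab\,\n_{\xia}\xia$ coming from the pair $\bigl((o,\ddt),(o,\ddt)\bigr)$, which the paper's ``it suffices to compute'' leaves implicit; your remark that it is controlled by $\widehat N_{\al}^{(3)}$ through $\widehat N_{\al}^{(3)}\xia=2\fa\n_{\xia}\xia$ neatly closes that small point.
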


\begin{proof}
First of all we need of the following relations
\begin{equation}\label{LgLetaom}
\left(\LLL_{\xi_{\al}}g\right)(\xi_{\al},x)=-\ea\left(\LLL_{\xi_{\al}}\eta_{\al}\right)(x)
=g\left(\n_{\xi_{\al}} \xi_{\al},x\right).
\end{equation}
  These equalities follow by virtue of 
  \eqref{eta-g}, \eqref{Fetaxia}, \eqref{Lie-der}.

Since any $\{J_{\al}, J_{\al}\}$ is a tensor field
of type $(1, 2)$, it suffices to compute the tensors $\{J_{\al}, J_{\al}\}((x, 0), (y, 0))$ and $\{J_{\al}, J_{\al}\}((x, 0), (o, \ddt))$, where $o$ is the zero element of $\X(\MM)$. Taking into account \eqref{{f1f1}}, \eqref{JaX}, \eqref{sym-skobi}, \eqref{JaJa}, we obtain consequently:
\begin{align*}
\{J_{\al},J_{\al}\}&\bigl((x,0\ddt),(y,0\ddt)\bigr)=\\
=&\
    \bigl\{\left(\fa x,\etaa(x)\ddt\right), \left(\fa y,\etaa(y)\ddt\right)\bigr\}
    -\bigl\{\left(x,0\ddt\right), \left(y,0\ddt\right)\bigr\}\\
 &-\Ja\bigl\{\left(\fa x,\etaa(x)\ddt\right), \left(y,0\ddt\right)\bigr\}
    -\Ja\bigl\{\left(x,0\ddt\right), \left(\fa y,\etaa(y)\ddt\right)\bigr\}\\
=&\ \bigl(\left\{\fa x,\fa y\right\},\ \left(\fa x(\etaa(y))+\fa y(\etaa(x))\right)\ddt\bigr)\\
&-\bigl(-\fa^2\left\{x,y\right\}+\etaa\left(\{x,y\}\right)\xia,\ 0\ddt\bigr)\\
&-\bigl(\fa\{\fa x,y\}-y\left(\etaa(x)\right)\xia,\ \etaa\left(\{\fa x,y\}\right)\ddt\bigr)\\
&-\bigl(\fa\{x,\fa y\}-x\left(\etaa(y)\right)\xia,\ \etaa\left(\{x,\fa y\}\right)\ddt\bigr)\\
=&\ \bigl(\widehat N_{\al}^{(1)}(x,y),\ \widehat N_{\al}^{(2)}(x,y)\ddt\bigr),\\
\{J_{\al},J_{\al}\}&\bigl((x,0\ddt),(o,\ddt)\bigr)=\\
=&\
 \bigl\{\left(\fa x,\etaa(x)\ddt\right), \left(-\xia,0\ddt\right)\bigr\}
   -\bigl\{\left(x,0\ddt\right), \left(o,\ddt\right)\bigr\}\\
 &-\Ja\bigl\{\left(\fa x,\etaa(x)\ddt\right), \left(o,\ddt\right)\bigr\}
   -\Ja\bigl\{\left(x,0\ddt\right), \left(-\xia,0\ddt\right)\bigr\}\\
=& -\bigl(\{\fa x,\xia\}, \xia\left(\etaa(x)\right)\ddt\bigr)
    + \bigl(\fa\{x,\xia\}, \etaa\left(\{x,\xia\}\right)\ddt\bigr)\\
=&\ \bigl(\widehat N_{\al}^{(3)}x,\ \widehat N_{\al}^{(4)}(x)\ddt\bigr).
\end{align*}
Then, for any $\al=1,2,3$, the vanishing of $\{J_{\al},J_{\al}\}$ holds if and only if  $\widehat N_{\al}^{(1)}$, $\widehat N_{\al}^{(2)}$, $\widehat N_{\al}^{(3)}$, $\widehat N_{\al}^{(4)}$ vanish.
\end{proof}

\begin{prop}\label{prop:N1=0=>N234=0}
For an almost contact structure $(\f_{\al},\xi_{\al},\eta_{\al})$, $\al\in\{1,2,3\}$ and a pseudo-Riemannian metric $g$, the vanishing of
$\widehat N_{\al}^{(1)}$ implies the vanishing of $\widehat N_{\al}^{(2)}$, $\widehat N_{\al}^{(3)}$ and $\widehat N_{\al}^{(4)}$.
\end{prop}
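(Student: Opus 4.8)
The plan is to transfer the problem to the product manifold $\MM\times\R$ carrying the almost complex structure $J_\al$ of \eqref{JaX}, and to exploit the algebraic identity that the associated Nijenhuis tensor $\{J_\al,J_\al\}$ shares with the ordinary Nijenhuis tensor. Note first that the statement assumes no compatibility between $g$ and the structure, so \eqref{JaX}, \eqref{sym-skobi} and \eqref{JaJa} are all available and, exactly as for $[J_\al,J_\al]$, the object $\{J_\al,J_\al\}$ is a genuine tensor field of type $(1,2)$ (one checks $\{J_\al,J_\al\}(fX,Y)=f\{J_\al,J_\al\}(X,Y)$ directly from \eqref{sym-skobi}, the first-order terms cancelling since $J_\al$ is $C^\infty$-linear); it is moreover symmetric because the braces \eqref{sym-skobi} are symmetric. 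From the proof of \propref{prop:JaJa=0_hatN1234=0} I retain the identifications
\[
\{J_\al,J_\al\}\bigl((x,0\ddt),(y,0\ddt)\bigr)=\bigl(\widehat N_\al^{(1)}(x,y),\ \widehat N_\al^{(2)}(x,y)\ddt\bigr),\quad
\{J_\al,J_\al\}\bigl((x,0\ddt),(o,\ddt)\bigr)=\bigl(\widehat N_\al^{(3)}x,\ \widehat N_\al^{(4)}(x)\ddt\bigr).
\]

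The next step is to establish the identity $\{J_\al,J_\al\}(J_\al X,Y)=-J_\al\{J_\al,J_\al\}(X,Y)$. This follows by the same formal manipulation as the classical $N_J(JX,Y)=-JN_J(X,Y)$: expand both sides by \eqref{JaJa}, using only the additivity of the braces in each slot and $J_\al^{\,2}=-\mathrm{Id}$ (so that, e.g., $\{J_\al(J_\al X),J_\al Y\}=-\{X,J_\al Y\}$), and the two expansions agree term by term; symmetry of $\{J_\al,J_\al\}$ then also gives $\{J_\al,J_\al\}(X,J_\al Y)=-J_\al\{J_\al,J_\al\}(X,Y)$. Now I feed the generating fields into this identity. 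Taking $X=(x,0\ddt)$, $Y=(y,0\ddt)$ and splitting $J_\al X=(\fa x,\etaa(x)\ddt)=(\fa x,0\ddt)+\etaa(x)(o,\ddt)$, the left-hand side equals, by $C^\infty$-bilinearity, the identifications above and symmetry applied to $\{J_\al,J_\al\}\bigl((o,\ddt),(y,0\ddt)\bigr)$,
\[
\bigl(\widehat N_\al^{(1)}(\fa x,y)+\etaa(x)\widehat N_\al^{(3)}y,\ \bigl[\widehat N_\al^{(2)}(\fa x,y)+\etaa(x)\widehat N_\al^{(4)}(y)\bigr]\ddt\bigr),
\]
while the right-hand side, by \eqref{JaX}, equals
\[
\bigl(-\fa\widehat N_\al^{(1)}(x,y)+\widehat N_\al^{(2)}(x,y)\xia,\ -\etaa\bigl(\widehat N_\al^{(1)}(x,y)\bigr)\ddt\bigr).
\]
Comparing the two pairs of components gives
\begin{align*}
\widehat N_\al^{(1)}(\fa x,y)+\etaa(x)\widehat N_\al^{(3)}y&=-\fa\widehat N_\al^{(1)}(x,y)+\widehat N_\al^{(2)}(x,y)\xia,\\
\widehat N_\al^{(2)}(\fa x,y)+\etaa(x)\widehat N_\al^{(4)}(y)&=-\etaa\bigl(\widehat N_\al^{(1)}(x,y)\bigr).
\end{align*}
(The choice $X=(o,\ddt)$, $Y=(y,0\ddt)$ yields a companion pair of relations, but the two above already suffice.)

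Finally, assume $\widehat N_\al^{(1)}=0$. Putting $x=\xia$ in the second relation and using $\fa\xia=0$ and $\etaa(\xia)=1$ from \eqref{str} together with the tensoriality of $\widehat N_\al^{(2)}$ (so $\widehat N_\al^{(2)}(o,y)=0$) gives $\widehat N_\al^{(4)}=0$. The first relation with $\widehat N_\al^{(1)}=0$ becomes $\widehat N_\al^{(2)}(x,y)\xia=\etaa(x)\widehat N_\al^{(3)}y$; taking $x=\xia$ yields $\widehat N_\al^{(3)}y=\widehat N_\al^{(2)}(\xia,y)\xia$, and substituting this back gives $\widehat N_\al^{(2)}(x,y)=\etaa(x)\widehat N_\al^{(2)}(\xia,y)$. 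By symmetry of $\widehat N_\al^{(2)}$, $\etaa(x)\widehat N_\al^{(2)}(\xia,y)=\etaa(y)\widehat N_\al^{(2)}(\xia,x)$, and $y=\xia$ gives $\widehat N_\al^{(2)}(\xia,x)=\etaa(x)\widehat N_\al^{(2)}(\xia,\xia)$. But $\widehat N_\al^{(2)}(\xia,\xia)=0$, read off from \eqref{hatN1234} since both of its summands contain the factor $\fa\xia=0$. Hence $\widehat N_\al^{(2)}(\xia,y)=0$ for all $y$, so $\widehat N_\al^{(2)}=0$, and therefore $\widehat N_\al^{(3)}=0$ as well; all three tensors vanish, as claimed.

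I expect the only delicate point to be the verification of the identity $\{J_\al,J_\al\}(J_\al X,Y)=-J_\al\{J_\al,J_\al\}(X,Y)$ and of the tensorial character of $\{J_\al,J_\al\}$ — these are exactly what make it legitimate to evaluate on the generating fields $(x,0\ddt)$, $(o,\ddt)$ and to decompose $J_\al(x,0\ddt)$ into its horizontal and vertical parts; everything after that is pointwise linear algebra using only \eqref{str}. Note in particular that no metric compatibility is used, consistent with the generality of the hypothesis.
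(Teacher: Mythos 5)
Your proof is correct, but it follows a genuinely different route from the paper's. The paper argues directly on $\MM$: it substitutes $y=\xi_{\al}$ into $\widehat N_{\al}^{(1)}=0$ and applies $\eta_{\al}$, using \eqref{{f1f1}}, \eqref{str} and \eqref{LgLetaom} to get $\left(\LLL_{\xi_{\al}}g\right)(x,\xi_{\al})=0$, hence $\widehat N_{\al}^{(4)}=0$, then $\n_{\xi_{\al}}\xi_{\al}=0$ and $\LLL_{\xi_{\al}}\eta_{\al}=0$ give $\widehat N_{\al}^{(3)}=0$, and finally applying $\eta_{\al}$ to $\widehat N_{\al}^{(1)}(\f_{\al}x,y)=0$ yields $\widehat N_{\al}^{(2)}=0$. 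You instead lift everything to $(\MM\times\R,J_{\al})$, prove the brace analogue $\{J_{\al},J_{\al}\}(J_{\al}X,Y)=-J_{\al}\{J_{\al},J_{\al}\}(X,Y)$ of the classical Nijenhuis identity (which indeed holds: it uses only $J_{\al}^2=-I$, valid on the product by \eqref{str}, and additivity of \eqref{sym-skobi}; your tensoriality check also goes through, the Leibniz defects of the braces cancelling), and combine it with the component identifications from the proof of \propref{prop:JaJa=0_hatN1234=0}; the rest is pointwise linear algebra with $\f_{\al}\xi_{\al}=0$, $\eta_{\al}(\xi_{\al})=1$ and the symmetry of $\widehat N_{\al}^{(2)}$, and I verified the two component relations and the final eliminations are correct. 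What your route buys is a structural explanation of why the ``horizontal--horizontal'' component $\widehat N_{\al}^{(1)}$ controls the other three, with no manipulation of $F_{\al}$ or Lie derivatives; the paper's route is shorter and never leaves $\MM$. One caveat: your closing remark that no metric compatibility is used is overstated, since the identifications you retain from \propref{prop:JaJa=0_hatN1234=0} (and the very matching of the vertical components with $\LLL_{\xi_{\al}}g$, $\LLL_{\xi_{\al}}\eta_{\al}$) rest on \eqref{eta-g} and \eqref{Lie-der}, i.e. on \eqref{HN-met}; the paper's own proof is in the same position via \eqref{LgLetaom}, so this does not affect validity in the paper's setting, but the extra generality you claim is not actually established.
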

\begin{proof}
We set $y=\xia$ in $\widehat N_{\al}^{(1)}(x,y)=0$ and apply $\etaa$. Then, using \eqref{{f1f1}} and  \eqref{str}, we obtain $\left(\LLL_{\xi_{\al}}g\right)(x,\xi_{\al})=0$ and thus $\widehat N_{\al}^{(4)}=0$, according to \eqref{LgLetaom}.

Therefore, 
from the form of $\widehat N_{\al}^{(1)}$ in \eqref{hatN1234},
we get
$
\{\f_{\al},\f_{\al}\}(\f_{\al}x,\xi_{\al})=0.
$
On the other hand, bearing in mind \eqref{LgLetaom}, we have that the vanishing of $\left(\LLL_{\xi_{\al}}g\right)(x,\xi_{\al})$ is equivalent to the vanishing of $\left(\LLL_{\xi_{\al}}\eta_{\al}\right)(x)$ and $\n_{\xi_{\al}}\xi_{\al}$. Thus,
we obtain $\widehat N_{\al}^{(3)}=0$.

Finally, applying $\etaa$ to $\widehat N_{\al}^{(1)}(\fa x,y)=0$
and using \eqref{{f1f1}}, we have
\[
\eta_{\al}\bigl(\{\f_{\al}^2x,\f_{\al}y\}\bigr)-\ea\left(\LLL_{\xi_{\al}}g\right)(\f_{\al}x,y)=0.
\]
The first term in the latter equality can be expressed in the following form \[-\ea\left(\LLL_{\xi_{\al}}g\right)(x,\f_{\al}y),\] 
using that $\LLL_{\xi_{\al}}\eta_{\al}$ vanishes. In such a way we obtain that $\widehat N_{\al}^{(2)}(x,y)=0$.
\end{proof}

\begin{prop}
For an almost contact structure $(\f_{\al},\xi_{\al},\eta_{\al})$, $\al\in\{1,2,3\}$ and a pseudo-Riemannian metric $g$, where  $\xia$ is Killing, $\widehat N_{\al}^{(2)}$ and $\widehat N_{\al}^{(4)}$ vanish.
Moreover, we have the following:
\begin{enumerate}
  \item $\widehat N_{\al}^{(1)}$ vanishes if and only if $\{\f_{\al},\f_{\al}\}$ vanishes;
  \item $\widehat N_{\al}^{(3)}$ vanishes if and only if 
  $\xia\lrcorner\{\fa,\fa\}$  vanishes.
\end{enumerate}
\end{prop}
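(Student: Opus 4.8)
The plan is to use the hypothesis that $\xia$ is Killing, i.e.\ $\LLL_{\xi_{\al}}g=0$, to annihilate every metric-dependent term in \eqref{hatN1234} and thereby reduce the four tensors to purely algebraic expressions in $\{\fa,\fa\}$. First I would note that $\LLL_{\xi_{\al}}g=0$ together with \eqref{LgLetaom} forces both $\LLL_{\xi_{\al}}\eta_{\al}=0$ and $\n_{\xi_{\al}}\xi_{\al}=0$. Inspecting \eqref{hatN1234}, the tensor $\widehat N_{\al}^{(2)}$ is constructed solely from $\LLL_{\xi_{\al}}g$, hence it vanishes; likewise $\widehat N_{\al}^{(4)}=-\LLL_{\xi_{\al}}\eta_{\al}=0$. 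This already proves the first assertion of the proposition.

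For item (i): the sole metric-dependent summand of $\widehat N_{\al}^{(1)}$ is $-\ea\left(\LLL_{\xi_{\al}}g\right)(x,y)\cdot\xi_{\al}$, which is now zero, so $\widehat N_{\al}^{(1)}=\{\fa,\fa\}$ identically, and the stated equivalence is immediate.

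For item (ii): in $\widehat N_{\al}^{(3)}$ the two terms carrying $\LLL_{\xi_{\al}}\eta_{\al}$ and $\n_{\xi_{\al}}\xi_{\al}$ drop out, leaving $\widehat N_{\al}^{(3)}x=\{\fa,\fa\}(\fa x,\xi_{\al})$. The implication ``$\xia\lrcorner\{\fa,\fa\}=0\ \Rightarrow\ \widehat N_{\al}^{(3)}=0$'' is trivial. For the converse I would reconstruct an arbitrary vector from its $\fa$-image: by \eqref{str} one has $\fa^2=-I+\xi_{\al}\otimes\eta_{\al}$, so $y=-\fa(\fa y)+\eta_{\al}(y)\xi_{\al}$, and hence, by bilinearity,
\[
\{\fa,\fa\}(y,\xi_{\al})=-\{\fa,\fa\}\bigl(\fa(\fa y),\xi_{\al}\bigr)+\eta_{\al}(y)\,\{\fa,\fa\}(\xi_{\al},\xi_{\al}).
\]
The first term on the right vanishes by the hypothesis $\widehat N_{\al}^{(3)}=0$ (take the argument $\fa y$). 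For the second term I would compute $\{\fa,\fa\}(\xi_{\al},\xi_{\al})$ straight from \eqref{{f1f1}}, using $\fa\xi_{\al}=0$ (a consequence of \eqref{str}), the identity $\{o,\,\cdot\,\}=0$ read off \eqref{braces}, and $\{\xi_{\al},\xi_{\al}\}=2\n_{\xi_{\al}}\xi_{\al}=0$; this gives $\{\fa,\fa\}(\xi_{\al},\xi_{\al})=0$. Therefore $\{\fa,\fa\}(y,\xi_{\al})=0$ for all $y$, i.e.\ $\xia\lrcorner\{\fa,\fa\}=0$.

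The identification of the vanishing terms in \eqref{hatN1234} is routine. The only delicate point — and it is a mild one — is the converse of (ii): since $\fa$ is not invertible, one cannot simply solve $x$ from $\fa x=y$, and the substitute identity $y=-\fa(\fa y)+\eta_{\al}(y)\xi_{\al}$ introduces the extra contribution $\eta_{\al}(y)\{\fa,\fa\}(\xi_{\al},\xi_{\al})$, whose vanishing must be established separately through the short computation of $\{\fa,\fa\}(\xi_{\al},\xi_{\al})$ indicated above.
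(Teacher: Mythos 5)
Your proposal is correct and follows essentially the same route as the paper: use $\LLL_{\xi_{\al}}g=0$ together with \eqref{LgLetaom} to kill $\widehat N_{\al}^{(2)}$, $\widehat N_{\al}^{(4)}$ and the metric terms in $\widehat N_{\al}^{(1)}$, $\widehat N_{\al}^{(3)}$, after which (i) is immediate and (ii) follows from the Killing assumption. Your explicit treatment of the converse in (ii) --- writing $y=-\f_{\al}(\f_{\al}y)+\eta_{\al}(y)\xi_{\al}$ and checking $\{\f_{\al},\f_{\al}\}(\xi_{\al},\xi_{\al})=0$ via $\f_{\al}\xi_{\al}=0$ and $\n_{\xi_{\al}}\xi_{\al}=0$ --- correctly fills in the step the paper compresses into ``bearing in mind the assumption for $\xi_{\al}$.''
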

\begin{proof}
Taking into account that $\LLL_{\xi_{\al}}g$ vanishes, we have $\widehat N_{\al}^{(1)}=
\{\f_{\al},\f_{\al}\}$ and $\widehat N_{\al}^{(2)}=0$. Further, we obtain $\widehat N_{\al}^{(4)}=0$ and $\widehat N_{\al}^{(3)}x=
\{\f_{\al},\f_{\al}\}(\fa x,\xia)$, according to \eqref{LgLetaom}. Then, (i) is obvious whereas (ii) holds, bearing in mind the assumption for $\xia$.
\end{proof}

Let $(\MM,\f_{\al},\xi_{\al},\eta_{\al},g)$, $(\al=1,2,3)$ be a manifold with almost contact HN-metric 3-structure. The symmetric $(1,2)$-tensors defined by
\begin{equation}\label{Nhat-3}
\widehat N_{\al}=\{\f_{\al},\f_{\al}\}-\ea\xi_{\al}\otimes \LLL_{\xi_{\al}}g
\end{equation}
we call \emph{associated Nijenhuis tensors} on $(\MM,\f_{\al},\xi_{\al},\eta_{\al},g)$.

The corresponding $(0,3)$-tensors are denoted by
\[
\widehat N_{\al}(x,y,z)=g(\widehat N_{\al}(x,y),z),\quad
\{\f_{\al},\f_{\al}\}(x,y,z)=g(\{\f_{\al},\f_{\al}\}(x,y),z).
\]
Then, taking into account \eqref{eta-g} and \eqref{Nhat-3}, we obtain
\[
\widehat N_{\al}(x,y,z)=\{\f_{\al},\f_{\al}\}(x,y,z)+\left(\LLL_{\xi_{\al}}g\right)(x,y)\,\etaa(z).
\]

\begin{thm}\label{thm:Ja=0_hatN=0}
Let $(\MM,\f_{\al},\xi_{\al},\eta_{\al},g)$, $(\al=1,2,3)$ be a manifold
with almost contact HN-metric 3-structure.
For any $\al$, the associated Nijenhuis tensor
$\{J_{\al},J_{\al}\}$ of the almost complex structure $J_{\al}$  on $(\MM\times\R,\Ja,G)$ vanishes if and only if the associated Nijenhuis tensor $\widehat{N}_{\al}$ 
of the structure $(\f_{\al},\xi_{\al},\eta_{\al},g)$ vanishes. 
\end{thm}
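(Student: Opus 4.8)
The plan is to obtain the statement as an immediate consequence of \propref{prop:JaJa=0_hatN1234=0} and \propref{prop:N1=0=>N234=0}, once one observes that, on a manifold with almost contact HN-metric 3-structure, the tensor $\widehat N_{\al}$ defined in \eqref{Nhat-3} coincides identically with the tensor $\widehat N_{\al}^{(1)}$ of \eqref{hatN1234}. Indeed, unwinding both definitions gives
\[
\widehat N_{\al}(x,y)=\{\f_{\al},\f_{\al}\}(x,y)-\ea\left(\LLL_{\xi_{\al}}g\right)(x,y)\cdot\xi_{\al}=\widehat N_{\al}^{(1)}(x,y),
\]
the only subtlety being that the coefficient $\ea$ appearing in \eqref{Nhat-3} is exactly the one produced by the HN-compatibility \eqref{HN-met}, so the two expressions agree sign by sign (this is consistent with \eqref{hatN1} for $\al=1$ and with \eqref{hatN2} for $\al=2,3$).

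Granting this identification, I would argue as follows. By \propref{prop:JaJa=0_hatN1234=0}, the associated Nijenhuis tensor $\{J_{\al},J_{\al}\}$ on $(\MM\times\R,\Ja,G)$ vanishes precisely when all four tensors $\widehat N_{\al}^{(1)}$, $\widehat N_{\al}^{(2)}$, $\widehat N_{\al}^{(3)}$, $\widehat N_{\al}^{(4)}$ of the structure $(\fa,\xia,\etaa,g)$ vanish. On the other hand, \propref{prop:N1=0=>N234=0} shows that the vanishing of $\widehat N_{\al}^{(1)}$ alone already forces the vanishing of $\widehat N_{\al}^{(2)}$, $\widehat N_{\al}^{(3)}$ and $\widehat N_{\al}^{(4)}$. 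Therefore the condition ``all four vanish'' is equivalent to the single condition $\widehat N_{\al}^{(1)}=0$, which by the preceding paragraph is the same as $\widehat N_{\al}=0$. Chaining these equivalences yields $\{J_{\al},J_{\al}\}=0 \iff \widehat N_{\al}=0$ for each $\al=1,2,3$, as claimed.

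I do not expect any genuine obstacle here: the substantive computations have been carried out already, namely the expansion of $\{J_{\al},J_{\al}\}$ on the two types of arguments $\bigl((x,0\ddt),(y,0\ddt)\bigr)$ and $\bigl((x,0\ddt),(o,\ddt)\bigr)$ in \propref{prop:JaJa=0_hatN1234=0}, and the reduction exploiting \eqref{LgLetaom} in \propref{prop:N1=0=>N234=0}. The sole verification left is the routine bookkeeping that the HN-metric normalization makes \eqref{Nhat-3} literally equal to the first line of \eqref{hatN1234}.
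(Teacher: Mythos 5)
Your proposal is correct and is essentially the paper's own argument: the paper likewise deduces the theorem from \propref{prop:JaJa=0_hatN1234=0} and \propref{prop:N1=0=>N234=0}, using that \eqref{Nhat-3} and the first line of \eqref{hatN1234} define the same tensor $\widehat N_{\al}=\widehat N_{\al}^{(1)}$. The identification you flag as the ``sole verification'' is indeed immediate, since both formulas carry the factor $\ea$ explicitly.
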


\begin{proof}
  The statement follows from \propref{prop:JaJa=0_hatN1234=0} and  \propref{prop:N1=0=>N234=0}, bearing in mind \eqref{hatN1234} and \eqref{Nhat-3}.
\end{proof}

\begin{thm}\label{thm:hatN}
Let $(\MM,\f_{\al},\xi_{\al},\eta_{\al},g)$, $(\al=1,2,3)$ be a manifold
with almost contact HN-metric 3-structure.
If two of the associated Nijenhuis tensors $\widehat N_{\al}$ vanish, the third also vanishes.
\end{thm}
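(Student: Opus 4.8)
The plan is to obtain \thmref{thm:hatN} as a formal consequence of two results already established above: the equivalence in \thmref{thm:Ja=0_hatN=0} between the vanishing of $\widehat N_{\al}$ on $(\MM,\fa,\xia,\etaa,g)$ and the vanishing of the associated Nijenhuis tensor $\{\Ja,\Ja\}$ of the almost complex structure $\Ja$ on the product $(\MM\times\R,\Ja,G)$, together with \propref{prop:hatNJ}, which asserts that if two of the six associated Nijenhuis tensors $\{J_{\bt},J_{\gm}\}$ of an almost hypercomplex structure with a compatible pseudo-Riemannian metric vanish, then so do all six. So the whole argument amounts to transporting the hypothesis to the almost hypercomplex manifold $\MM\times\R$, applying \propref{prop:hatNJ} there, and transporting the conclusion back.

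First I would recall that, as shown in Section~\ref{sec:1} (and via \eqref{JaX}, \eqref{sym-skobi}), the product $\MM\times\R$ equipped with $(J_1,J_2,J_3)$ and $G=g-\D t^2$ carries an almost hypercomplex HN-metric structure, so \propref{prop:hatNJ} is applicable to it with the symmetric braces built from $G$. Now suppose two of $\widehat N_1$, $\widehat N_2$, $\widehat N_3$ vanish; write $\al_1,\al_2$ for the two indices with $\widehat N_{\al_i}=0$ and $\al_3$ for the remaining one. Applying \thmref{thm:Ja=0_hatN=0} for $\al=\al_1$ and $\al=\al_2$ gives $\{J_{\al_1},J_{\al_1}\}=0$ and $\{J_{\al_2},J_{\al_2}\}=0$. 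These are two of the six tensors listed in \propref{prop:hatNJ}, hence all six vanish; in particular $\{J_{\al_3},J_{\al_3}\}=0$. Applying \thmref{thm:Ja=0_hatN=0} once more, now for $\al=\al_3$, yields $\widehat N_{\al_3}=0$, which is exactly the assertion. Note that the three cases are handled uniformly, since \thmref{thm:Ja=0_hatN=0} holds for every $\al\in\{1,2,3\}$ despite the sign $\ea$ distinguishing $\al=1$ from $\al=2,3$.

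I do not expect a genuine obstacle here: the only point requiring a word of care is that a pair of \emph{diagonal} tensors $\{J_{\al_1},J_{\al_1}\}$, $\{J_{\al_2},J_{\al_2}\}$ indeed qualifies as ``two of the six'' in \propref{prop:hatNJ}, which is precisely the generality in which that proposition was stated, so nothing extra is needed. The substantive work has already been done in the proofs of \propref{prop:JaJa=0_hatN1234=0} and \propref{prop:N1=0=>N234=0} (yielding \thmref{thm:Ja=0_hatN=0}) and of \propref{prop:hatNJ}; a direct proof of \thmref{thm:hatN} purely in terms of the fundamental tensors $F_{\al}$ and the interrelations of the $\fa,\xia,\etaa$ coming from \eqref{str} would be considerably more laborious, and the passage to $\MM\times\R$ is exactly what circumvents it.
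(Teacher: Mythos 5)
Your argument is correct and coincides with the paper's own proof, which deduces the statement exactly by combining \propref{prop:hatNJ} on the product $(\MM\times\R,\Ja,G)$ with the equivalence of \thmref{thm:Ja=0_hatN=0}. The only difference is that you spell out the transport to $\MM\times\R$ and back in detail, which the paper leaves implicit.
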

\begin{proof}
It follows by virtue of 
\propref{prop:hatNJ} and \thmref{thm:Ja=0_hatN=0}.
\end{proof}

\section{Natural connections with totally skew-symmetric torsion}

A linear connection $D$ is said to be a \emph{natural connection for $(\f_\al,\xi_\al,\eta_\al,g)$},  $\al\in\{1,2,3\}$, if it preserves the structure, i.e.
\[
D\f_\al=D\xi_\al=D\eta_\al=Dg=0.
\]

\begin{thm}\label{thm:NN=Nhat}
Let $(\MM,\f_1,\xi_1,\eta_1,g)$ be a pseudo-Riemannian manifold with an almost contact metric structure.
The following statements are equivalent:
\begin{enumerate}
  \item The manifold belongs to the class $\W_2\oplus\W_4\oplus\W_9\oplus\W_{10}\oplus\W_{11}$ determined by
\begin{equation}\label{F1_W3W7}
F_1 (\f_1 x,y,z)+F_1(\f_1y,x,z)+F_1(x,y,\f_1z)+F_1(y,x,\f_1z)=0.
\end{equation}
  \item The associated {Nijenhuis} tensor $\widehat N_1$ vanishes and $\xi_1$ is a Killing vector field;
  \item The tensor $\{\f_1,\f_1\}$ vanishes and $\xi_1$ is a Killing vector field;
  \item The {Nijenhuis} tensor $N_1$ is a 3-form and $\xi_1$ is a Killing vector field;
  \item There exists a natural connection $D^1$ with totally skew-symmetric torsion for the structure $(\f_1,\xi_1,\eta_1,g)$ and this connection is unique.
\end{enumerate}
\end{thm}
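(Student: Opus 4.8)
The plan is to prove the equivalence of (i)--(v) in a cycle, establishing (i)$\Leftrightarrow$(ii)$\Leftrightarrow$(iii)$\Leftrightarrow$(iv) by direct tensorial computation and then treating (v) separately, linking it to (iii) via the Friedrich--Ivanov-type characterization of connections with skew torsion.

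\smallskip

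First I would handle (ii)$\Leftrightarrow$(iii): this is immediate once $\xi_1$ is assumed Killing, since then $\LLL_{\xi_1}g=0$ and \eqref{Nhat-3} gives $\widehat N_1=\{\f_1,\f_1\}$ (this is essentially the unnumbered Proposition just before \thmref{thm:hatN}). So the content is to show that, when $\xi_1$ is Killing, the vanishing of $\{\f_1,\f_1\}$ (equivalently $\widehat N_1$) is equivalent to the defining relation \eqref{F1_W3W7} of the class $\W_2\oplus\W_4\oplus\W_9\oplus\W_{10}\oplus\W_{11}$. For this I would start from the expression \eqref{N1hat=F1} of $\widehat N_1$ in terms of $F_1$, namely
\[
\widehat N_1(x,y,z)=F_1(\f_1 x,y,z)+F_1(x,y,\f_1 z)+F_1(\f_1 y,x,z)+F_1(y,x,\f_1z)+\bigl(F_1(x,\f_1 y,\xi_1)+F_1(y,\f_1x,\xi_1)\bigr)\eta_1(z),
\]
and use \eqref{Lxi1g=F1} together with the Killing condition $\LLL_{\xi_1}g=0$ to kill the $\eta_1(z)$-term, reducing $\widehat N_1=0$ exactly to \eqref{F1_W3W7}. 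One must also check the converse reduction is reversible, i.e.\ that \eqref{F1_W3W7} forces the $\xi_1$-components to behave consistently; here the identities \eqref{Fa-prop} for $\al=1$ (with $\ep_1=1$) and the symmetry $F_1(x,\f_1 y,\xi_1)=F_1(x,\f_1 z,\xi_1)\cdots$ type relations derived from \eqref{Fetaxia} are the routine but necessary bookkeeping. Identifying the precise class label $\W_2\oplus\W_4\oplus\W_9\oplus\W_{10}\oplus\W_{11}$ with \eqref{F1_W3W7} is a matter of quoting the Alexiev--Ganchev classification \cite{AlGa}, so I would simply cite it rather than re-derive the class decomposition.

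\smallskip

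For (iii)$\Leftrightarrow$(iv): the Nijenhuis tensor $N_1$ being a $3$-form means $N_1(x,y,z)$ is totally skew-symmetric; comparing \eqref{N1=F1} and \eqref{N1hat=F1} one sees $\widehat N_1$ is the ``symmetrization in the first two slots'' partner of $N_1$, so $N_1$ skew-symmetric in $(x,y)$ is automatic from \eqref{N1=F1}, and the remaining requirement (skew-symmetry in the last pair, or equivalently cyclic behavior) translates into the same relation \eqref{F1_W3W7}; again the Killing assumption on $\xi_1$ is used to discard the $\eta_1$-terms. The equivalence (i)$\Leftrightarrow$(ii)--(iv) is thus a single underlying linear identity among the components of $F_1$, viewed from three angles.

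\smallskip

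The main obstacle, and the place deserving the most care, is (v). The strategy is the standard one: given such a structure, the candidate connection is $D^1_x y=\n_x y+\tfrac12 T(x,y,\cdot)^\sharp$ with torsion $3$-form built from $N_1$ (and the $\eta_1$, $\D\eta_1$ data), and one must verify (a) existence: when \eqref{F1_W3W7} holds and $\xi_1$ is Killing, the natural candidate $T$ is genuinely a $3$-form and the resulting $D^1$ satisfies $D^1\f_1=D^1\xi_1=D^1\eta_1=D^1g=0$; and (b) uniqueness: any two natural connections with skew torsion have torsions differing by a tensor that both is a $3$-form and lies in the image of the algebraic constraint imposed by $D\f_1=0$, and one shows this intersection is trivial. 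For (a) the key computation is expressing $D^1\f_1=0$ as an algebraic equation relating $T$ to $F_1$, solving it, and checking the solution is skew — this is where \eqref{F1_W3W7} is exactly what is needed, and where the Killing condition guarantees the $\xi_1$-direction is controlled (so that $D^1\xi_1=0$ and $D^1g=0$ follow). The converse direction of (v)$\Rightarrow$(i) is easier: if $D^1$ exists with skew torsion $T$, then $N_1$ is expressible through $T$ in a manifestly skew way, giving (iv), and $D^1g=0$ with $T$ skew forces $\xi_1$ Killing. I would present the torsion formula explicitly, reduce all four conditions $D^1\f_1=D^1\xi_1=D^1\eta_1=D^1g=0$ to relations on $F_1$ and $T$, and check that \eqref{F1_W3W7}$\,+\,$(Killing) is precisely their joint solvability condition; the uniqueness then falls out of the injectivity of the map $T\mapsto$ (its symmetrizations entering $D^1\f_1$). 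I expect the bookkeeping of the $\xi_1$- and $\eta_1$-components in the $D^1\f_1=0$ equation to be the genuinely delicate part.
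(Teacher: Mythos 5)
Your outline for (i)--(iv) is essentially the paper's route: everything is reduced to identities in $F_1$ via \eqref{N1hat=F1} and \eqref{Lxi1g=F1}, with (ii)$\Leftrightarrow$(iii) immediate from \eqref{hatN1} once $\xi_1$ is Killing, and the class label in (i) quoted from \cite{AlGa}. Two points are glossed over, though. First, in (i)$\Rightarrow$(ii) the Killing condition is \emph{not} a hypothesis of (i), so the real content is that \eqref{F1_W3W7} alone forces $\LLL_{\xi_1}g=0$; the paper gets this by the concrete substitutions $x=y=\xi_1$ (giving $F_1(\xi_1,\xi_1,z)=0$) and then $x\to\f_1x$, $y\to\f_1y$, $z=\xi_1$ in \eqref{F1_W3W7}. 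Your ``routine but necessary bookkeeping'' is exactly this step and should be exhibited, since without it the chain (i)$\Leftrightarrow$(ii) is only proved under an extra hypothesis. Second, your description of (iii)$\Leftrightarrow$(iv) as ``comparing \eqref{N1=F1} and \eqref{N1hat=F1}'' understates what is needed: the symmetrization of $N_1$ in its first two slots is identically zero, and the relation that actually does the work is $\widehat N_1(x,y,z)=N_1(z,x,y)+N_1(z,y,x)$ (the paper's \eqref{NN=Nhat}), whose proof requires the structure identities \eqref{Fa-prop} and their consequence \eqref{FffF1}, not a mere term-by-term comparison. Your alternative of passing through \eqref{F1_W3W7} amounts to the same computation, so be explicit that it must be carried out.

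The genuinely different choice is (v). The paper simply invokes Theorem 8.2 of \cite{FrIv02}, which states precisely the equivalence (iv)$\Leftrightarrow$(v) (existence and uniqueness of a natural connection with totally skew-symmetric torsion if and only if $N_1$ is a $3$-form and $\xi_1$ is Killing), and then only records the torsion formula \eqref{T1Kuch}/\eqref{T1} afterwards. You propose instead to re-derive this from scratch: solve $D^1\f_1=0$ algebraically for a skew $T$, verify $D^1\xi_1=D^1\eta_1=D^1g=0$, and prove uniqueness by an injectivity argument. That is a legitimate but substantially heavier route --- it is the content of the cited theorem --- and in your sketch it is only a plan: no candidate torsion form is written down, the solvability of $D^1\f_1=0$ within skew $T$ is not reduced to \eqref{F1_W3W7} plus the Killing condition, and the uniqueness argument is asserted rather than given. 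As it stands this is the gap in the proposal; either carry out that construction in full (essentially reproving Friedrich--Ivanov in the almost contact metric setting) or, as the paper does, quote their Theorem 8.2 and let (iv)$\Leftrightarrow$(v) come for free.
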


\begin{proof}
Using \eqref{N1hat=F1} and \eqref{Lxi1g=F1}, we have that the vanishing of $\widehat N_1$ and $\LLL_{\xi_1}g$ implies the identity \eqref{F1_W3W7}. Viceversa, setting $x=y=\xi_1$ in \eqref{F1_W3W7}, we have $F_1(\xi_1,\xi_1,z)=0$. If we put $x=\f_1x$, $y=\f_1y$, $z=\xi_1$ in \eqref{F1_W3W7} and use the latter vanishing, we obtain that $\LLL_{\xi_1}g=0$ and therefore $\widehat N_1=0$. The determination of the class in (i) by \eqref{F1_W3W7} follows immediately from the definition of the basic classes of the classification in \cite{AlGa}. So, the equivalence between (i) and (ii) is valid.

Now, we need to prove the following relation
\begin{equation}\label{NN=Nhat}
\widehat N_1(x,y,z)=N_1(z,x,y)+N_1(z,y,x).
\end{equation}
We calculate the right hand side of \eqref{NN=Nhat} using \eqref{N1=F1}. Taking into account \eqref{Fa-prop} and their consequence
\begin{equation}\label{FffF1}
\begin{array}{l}
    F_1(x,y,\f_1z)=F_1(x,\f_1y,z)+F_1(x,\xi_1,\f_1y)\eta_1(z)\\
    \phantom{F_1(x,y,\f_1z)=F_1(x,\f_1y,z)}
    +F_1(x,\xi_1,\f_1z)\eta_1(y),
\end{array}
\end{equation}
we obtain
\[
\begin{array}{l}
N_1(z,x,y)+N_1(z,y,x)=
-F_1(\f_1x,z,y)-F_1(\f_1y,z,x)\\
\phantom{N_1(z,x,y)+N_1(z,y,x)=}
-F_1(x,z,\f_1y)-F_1(y,z,\f_1x)\\
\phantom{N_1(z,x,y)+N_1(z,y,x)=}
-F_1(x,\f_1z,\xi_1)\eta_1(y)-F_1(y,\f_1z,\xi_1)\eta_1(x).
\end{array}
\]
Using again \eqref{FffF1} and the first equality in \eqref{Fa-prop}, we establish that the right hand side of the latter equality is equal to $\widehat N_1(x,y,z)$, according to \eqref{N1hat=F1}.
Therefore, \eqref{NN=Nhat} is valid.

The relation \eqref{NN=Nhat} implies the equivalence between (ii) and (iv), whereas
the equivalence between (iv) and (v) is given in Theorem 8.2 of \cite{FrIv02}.
The equivalence between (ii) and (iii) follows from \eqref{hatN1}.
\end{proof}

For the natural connection $D^1$ with totally skew-symmetric torsion for the structure $(\f_1,\xi_1,\eta_1,g)$, we have
\begin{equation}\label{D1T1}
g\left(D^1_{x} y,z\right)=g(\n_x y, z)+\frac12 T_1(x,y,z)
\end{equation}
and its torsion $T_1$, according to Theorem 8.2 of \cite{FrIv02}, is determined in our notations by
\begin{equation}\label{T1Kuch}
\begin{array}{l}
T_1=-\eta_1\wedge\D\eta_1+\D^\f_1 \Phi+N_1-\eta_1\wedge(\xi_1 \lrcorner N_1),\\
\end{array}
\end{equation}
where it is used the notation $\D^{\f_1} \Phi(x,y,z)=-\D \Phi(\f_1 x,\f_1 y,\f_1 z)$ for the fundamental 2-form
$\Phi$ of the almost contact metric structure, i.e. $\Phi(x,y)=g(x,\f_{1}y)$.

Since $\eta_1\wedge\D\eta_1=\s\{\eta_1\otimes\D\eta_1\}$ holds and because of \eqref{Fa-prop}, \eqref{Fetaxia} and the fact that $\xi_1$ is Killing, it is valid the following
\begin{equation}\label{1}
(\eta_1\wedge\D\eta_1)(x,y,z)=-2\sx\{\eta_1(x)F_1(y,\f_1 z,\xi_1)\}.
\end{equation}
Moreover, from the equalities $\D \Phi(x,y,z)=-\sx F_1(x,y,z)$ and \eqref{Fa-prop}, we get
\begin{equation}\label{2}
\D^{\f_1} \Phi(x,y,z)=-\sx\{F_1(\f_1 x,y,z)+2F_1(x,\f_1 y,\xi_1)\eta_1(z)\}.
\end{equation}
So, applying \eqref{1}, \eqref{2}, \eqref{N1=F1} and \eqref{Fa-prop} to the equality \eqref{T1Kuch},
we obtain an expression of $T_1$ in terms of $F_1$ as follows
\begin{equation}\label{T1}
\begin{array}{l}
T_1(x,y,z)=F_1(x,y,\f_1 z)-F_1(y,x,\f_1 z)-F_1(\f_1 z,x,y)\\
\phantom{T_1(x,y,z)=}
+2F_1(x,\f_1 y,\xi_1)\eta_1(z).
\end{array}
\end{equation}

The equivalences in the following theorem are known from \cite{Man31} and \cite{ManIv36}.
\begin{thm}\label{thm:F3F7}
  The following statements for an almost contact B-metric manifold $(\MM,\f_2,\xi_2,\eta_2,g)$ are equivalent:
 \begin{enumerate}
 \item
 It belongs to the class $\F_3\oplus\F_7$, which is characterised by the conditions: the cyclic sum of $F_2$ by the three arguments vanishes and $\xi_2$ is Killing;
 \item
 It has a vanishing associated {Nijenhuis} tensor $\widehat N_2$;
 \item
 It has a vanishing tensor $\{\f_2,\f_2\}$ and $\xi_2$ is Killing;
 \item
 It admits the existence of a unique natural connection $D^2$ with totally skew-sym\-met\-ric torsion.
\end{enumerate}
\end{thm}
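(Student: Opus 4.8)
The plan is to follow exactly the pattern of the proof of \thmref{thm:NN=Nhat}, transported to the B-metric index $\al=2$, where $\ep_2=-1$ so that $F_2$ is symmetric in its last two arguments and the identities \eqref{str}, \eqref{Fa-prop} take their $\al=2$ form. I would begin with the cheapest link, (ii) $\Leftrightarrow$ (iii): by the definition \eqref{hatN2} we have $\widehat N_2=\{\f_2,\f_2\}+\xi_2\otimes\LLL_{\xi_2}g$, so if $\{\f_2,\f_2\}=0$ and $\xi_2$ is Killing then $\LLL_{\xi_2}g=0$ and hence $\widehat N_2=0$; conversely $\widehat N_2=0$ forces $\xi_2$ to be Killing by \propref{prop:hatN=0=>Kill}, whence $\LLL_{\xi_2}g=0$ and therefore $\{\f_2,\f_2\}=0$. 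In particular this already shows that the single condition $\widehat N_2=0$ packages both parts of (i).

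For (i) $\Leftrightarrow$ (ii) I would use the expression of $\widehat N_2$ in terms of $F_2$ recalled in the proof of \propref{prop:hatN=0=>Kill}, together with the formula there for $(\LLL_{\xi_2}g)(x,y)$ via $F_2$, and the $\al=2$ versions of \eqref{Fa-prop} and of their consequence \eqref{FffF1}. Assuming the cyclic-sum condition $\sx F_2(x,y,z)=0$ and that $\xi_2$ is Killing, direct substitution into the $F_2$-form of $\widehat N_2$ makes all terms cancel, the $\eta_2$-terms being killed by the Killing condition via \eqref{Fetaxia} and \eqref{Lie-der}, so $\widehat N_2=0$. For the converse, $\widehat N_2=0$ first gives $\xi_2$ Killing by \propref{prop:hatN=0=>Kill}; then, exactly as in \thmref{thm:NN=Nhat}, I would substitute $x=y=\xi_2$ and then $x=\f_2x$, $y=\f_2y$, $z=\xi_2$, and use \eqref{str} to peel off the remaining components and recover $\sx F_2(x,y,z)=0$. (Alternatively one feeds $\widehat N_2=0$ into the formula for $F_2$ in terms of $N_2$ and $\widehat N_2$ from \cite{IvManMan45} and reads off the cyclic-sum vanishing.) That ``$\sx F_2=0$ together with $\xi_2$ Killing'' precisely defines $\F_3\oplus\F_7$ is immediate from the description of the basic classes in the classification of \cite{GaMiGr}.

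The link (iii) $\Leftrightarrow$ (iv) I would take from the theory of natural connections with totally skew-symmetric torsion on almost contact B-metric manifolds in \cite{Man31} and \cite{ManIv36}: such a connection $D^2$ has the form $g(D^2_xy,z)=g(\n_xy,z)+\frac12 T_2(x,y,z)$ with $T_2$ built from $\eta_2$, $\D\eta_2$, the fundamental 2-form and $N_2$ by an analogue of \eqref{T1Kuch}; the requirement that this $T_2$ be a 3-form is met exactly when $\{\f_2,\f_2\}=0$ and $\xi_2$ is Killing, and then the formula determines $T_2$, hence $D^2$, uniquely. Equivalently, one simply cites from those references that a unique natural connection with skew torsion exists iff the manifold lies in $\F_3\oplus\F_7$.

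The one genuinely laborious step is the forward-and-back computation in (i) $\Leftrightarrow$ (ii): because $\ep_2=-1$, the cancellation pattern in the $F_2$-expression of $\widehat N_2$ differs from the Hermitian case $\al=1$, so one must track signs carefully, repeatedly using $F_2(x,y,z)=F_2(x,z,y)$ and the $\al=2$ form of \eqref{FffF1}, and the chain of substitutions that returns from $\widehat N_2=0$ to $\sx F_2=0$ must be ordered correctly. Every other implication is either a one-line consequence of a definition or a citation to \cite{Man31} and \cite{ManIv36}.
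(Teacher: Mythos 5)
Your proposal is essentially the paper's own proof: the paper establishes (ii) $\Leftrightarrow$ (iii) exactly as you do, from \eqref{hatN2} together with \propref{prop:hatN=0=>Kill}, and disposes of the equivalence of (i), (ii) and (iv) purely by citing \cite{Man31} and \cite{ManIv36} --- which is your stated fallback for both the (i) $\Leftrightarrow$ (ii) link and the connection statement (iv). The only substantive difference is your optional attempt to reprove (i) $\Leftrightarrow$ (ii) directly; there the converse step as sketched is doubtful: the specializations $x=y=\xi_2$ and $x=\f_2x$, $y=\f_2y$, $z=\xi_2$ are the device used in \thmref{thm:NN=Nhat} to extract the Killing condition from an identity of type \eqref{F1_W3W7}, and plugging such special values into $\widehat N_2=0$ cannot by itself return the full tensorial identity $\sx F_2(x,y,z)=0$; one needs a genuine manipulation with general arguments and the $\al=2$ symmetries of $F_2$ (or simply the cited results, as the paper does). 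Since you explicitly allow the citation route, the argument as a whole is correct and matches the paper; the handmade computation, if you insist on it, needs the converse direction reworked.
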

\begin{proof}
The equivalence of (i), (ii) and (iv) is known from \cite{Man31} and \cite{ManIv36}, whereas
the equivalence of (ii) and (iii) follows from \eqref{hatN2} and \propref{prop:hatN=0=>Kill}.
\end{proof}

For the natural connection $D^2$ with totally skew-symmetric torsion for the structure $(\f_2,\xi_2,\eta_2,g)$, we have
\begin{equation}\label{D2T2}
g\left(D^2_{x} y,z\right)=g(\n_x y, z)+\frac12 T_2(x,y,z),
\end{equation}
where its torsion $T_2$ is determined by
$
T_2=\eta_2\wedge \D\eta_2+\frac{1}{4}\s N_2
$
and it is expressed
in terms of $F_2$ by
\begin{equation}\label{T37} %
T_2(x,y,z)=-\frac{1}{2} \sx\bigl\{F_2(x,y,\f_2 z)
-3\eta_2(x)F_2(y,\f_2 z,\xi_2)\bigr\}. %
\end{equation} %

%
%
%

Using \thmref{thm:NN=Nhat}, \thmref{thm:F3F7} and \propref{prop:hatN=0=>Kill}, we get immediately the following
\begin{thm}\label{thm:nat-conn}
    Let $(\MM,\fa,\xia,\etaa,g)$, $(\al=1,2,3)$ be a manifold with almost contact HN-metric 3-structure.
    The existence of unique natural connections with totally skew-sym\-met\-ric torsion for two of the three structures implies an existence of a unique natural connection with totally skew-symmetric torsion for the remaining third structure.
\end{thm}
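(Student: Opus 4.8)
The plan is to translate the statement into a condition purely about the associated Nijenhuis tensors and the Reeb vector fields, and then to invoke \thmref{thm:hatN}. By \thmref{thm:NN=Nhat} (for $\al=1$) and by \thmref{thm:F3F7} together with \propref{prop:hatN=0=>Kill} (for $\al=2,3$), for every $\al\in\{1,2,3\}$ the existence of a unique natural connection with totally skew-symmetric torsion for $(\fa,\xia,\etaa,g)$ is equivalent to the conjunction \emph{$\widehat N_{\al}=0$ and $\xia$ is Killing}; moreover, for $\al\in\{2,3\}$ the second half of this conjunction is automatic once the first half holds. So, letting $\{i,j\}$ be the two indices whose structures carry such connections and $k$ the remaining index, we have $\widehat N_i=\widehat N_j=0$ with $\xi_i,\xi_j$ Killing, hence $\widehat N_k=0$ by \thmref{thm:hatN}; it then only remains to ensure that $\xi_k$ is Killing.

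If $k\in\{2,3\}$ this is immediate from \propref{prop:hatN=0=>Kill}, and \thmref{thm:F3F7} supplies the desired unique connection for the $k$-th structure. Thus the only case needing real work is $k=1$, that is, when the two given connections belong to the two almost contact B-metric structures; there $\xi_2,\xi_3$ are Killing and $\widehat N_1=0$ is already known, so, since by \thmref{thm:NN=Nhat} the pair \emph{$\widehat N_1=0$ and $\xi_1$ Killing} is equivalent to condition \eqref{F1_W3W7}, it suffices to verify \eqref{F1_W3W7}.

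To do so I would differentiate, with the Levi-Civita connection, the algebraic relation $\f_1=\f_2\circ\f_3-\xi_2\otimes\eta_3$ which is a consequence of \eqref{str}; using \eqref{F}, \eqref{Fetaxia}, \eqref{eta-g} and \eqref{HN-met}, this yields
\[
F_1(x,y,z)=F_2(x,\f_3 y,z)+F_3(x,y,\f_2 z)-F_3(x,\f_3 y,\xi_3)\,\eta_2(z)-\eta_3(y)\,F_2(x,\f_2 z,\xi_2).
\]
Substituting this expression into the left-hand side of \eqref{F1_W3W7} and simplifying with \eqref{str}, \eqref{HN-met} and \eqref{Fa-prop}, everything reduces to combinations of values of $F_2$ and $F_3$; since $\widehat N_2=\widehat N_3=0$ means, by \thmref{thm:F3F7}(i), that the cyclic sums of $F_2$ and of $F_3$ vanish and that $F_{\al}(x,\f_{\al}y,\xi_{\al})$ is skew-symmetric for $\al=2,3$ (the Killing property of $\xi_2,\xi_3$, cf.\ \eqref{Fetaxia}, \eqref{Lie-der}), those combinations cancel and \eqref{F1_W3W7} follows. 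Equivalently, one may write $(\LLL_{\xi_1}g)(x,y)=g(\n_x\xi_1,y)+g(\n_y\xi_1,x)$, expand $\n\xi_1$ from $\xi_1=\f_2\xi_3$, and simplify with the same identities to get $\LLL_{\xi_1}g=0$. This bookkeeping is the main obstacle, but it is routine: after the substitution everything is formal manipulation with the structure identities.

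Finally, with $\widehat N_1=0$ and $\xi_1$ Killing in hand, \thmref{thm:NN=Nhat} produces the unique natural connection with totally skew-symmetric torsion for $(\f_1,\xi_1,\eta_1,g)$, settling the case $k=1$ and hence completing the proof.
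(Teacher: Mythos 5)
Your main chain is exactly the paper's: the paper obtains the theorem directly from \thmref{thm:NN=Nhat}, \thmref{thm:F3F7} and \propref{prop:hatN=0=>Kill}, with \thmref{thm:hatN} providing the transfer of the vanishing of the associated Nijenhuis tensors, and your reduction of the statement to ``$\widehat N_{\al}=0$ plus the appropriate Killing conditions'', together with the easy cases $k\in\{2,3\}$, coincides with that argument. Where you go beyond the paper is the case $k=1$: you correctly observe that \thmref{thm:hatN} only yields $\widehat N_1=0$, whereas \thmref{thm:NN=Nhat} also demands that $\xi_1$ be Killing, and that (by the remark after \propref{prop:hatN=0=>Kill}) this is not automatic for an almost contact metric structure --- a point the paper's one-line proof passes over in silence. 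So your case analysis is a legitimate refinement rather than a deviation.

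The weak spot is the execution of that extra step. Your differentiated identity is fine: starting from $\f_1=\f_2\circ\f_3-\xi_2\otimes\eta_3$ and using \eqref{F}, \eqref{eta-g}, \eqref{Fetaxia} and the $g$-symmetry of $\f_2$ coming from \eqref{HN-met}, one indeed gets $F_1(x,y,z)=F_2(x,\f_3 y,z)+F_3(x,y,\f_2 z)-F_3(x,\f_3 y,\xi_3)\,\eta_2(z)-\eta_3(y)\,F_2(x,\f_2 z,\xi_2)$. But the decisive claim --- that the $\F_3\oplus\F_7$ conditions for $\al=2,3$ (vanishing cyclic sums of $F_2$, $F_3$ and Killing $\xi_2$, $\xi_3$) make the substituted left-hand side of \eqref{F1_W3W7} collapse, equivalently force $\LLL_{\xi_1}g=0$ --- is only asserted as ``routine bookkeeping'' and never carried out, and it is not a priori obvious that the cyclic-sum conditions suffice for the required cancellations. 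As written, this is the one genuine gap in your proposal. A route closer to the paper's machinery for closing it would be to exploit the full strength of \propref{prop:hatNJ}: the vanishing of the mixed tensors $\{J_1,J_2\}$, $\{J_1,J_3\}$, $\{J_2,J_3\}$ on $\MM\times\R$, which that proposition also guarantees, carries more information about $(\f_1,\xi_1,\eta_1,g)$ than $\widehat N_1=0$ alone and is the natural candidate source of the missing Killing condition; neither your argument nor the paper's explicit citations uses it.
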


\begin{cor}\label{cor-G1}
Let $(\MM,\fa,\xia,\etaa,g)$, $(\al=1,2,3)$ be a manifold with almost contact HN-metric 3-structure.
If the manifold belongs to two of the following three classes for the corresponding structure, then the manifold belongs to the remaining third class for the corresponding structure:
  \begin{enumerate}
    \item $\W_{2}\oplus\W_{4}\oplus\W_{9}\oplus\W_{10}\oplus\W_{11}$ for $\al=1$;
    \item $\F_3\oplus\F_7$  for $\al=2$;
    \item $\F_3\oplus\F_7$  for $\al=3$.
  \end{enumerate}
\end{cor}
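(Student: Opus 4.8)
The plan is to reduce the statement to \thmref{thm:nat-conn} by translating each of the three class memberships into the existence of a unique natural connection with totally skew-symmetric torsion for the corresponding structure, and then translating back.

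First I would recall the needed dictionary. For the almost contact metric structure $(\f_1,\xi_1,\eta_1,g)$, membership in $\W_2\oplus\W_4\oplus\W_9\oplus\W_{10}\oplus\W_{11}$, i.e. the validity of \eqref{F1_W3W7}, is equivalent to the existence of a (necessarily unique) natural connection $D^1$ with totally skew-symmetric torsion; this is the equivalence (i)$\Leftrightarrow$(v) of \thmref{thm:NN=Nhat}. For the almost contact B-metric structure $(\f_2,\xi_2,\eta_2,g)$, membership in $\F_3\oplus\F_7$ is equivalent to the existence of a unique such connection $D^2$, by the equivalence (i)$\Leftrightarrow$(iv) of \thmref{thm:F3F7}; and since $(\f_3,\xi_3,\eta_3,g)$ is again an almost contact B-metric structure, \thmref{thm:F3F7} applies verbatim with $\al=3$, yielding the same equivalence for a connection $D^3$.

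Next I would run the argument itself. Assume the manifold lies in two of the three listed classes. By the equivalences just recalled, it then admits a unique natural connection with totally skew-symmetric torsion for each of the two corresponding structures among $(\f_{\al},\xi_{\al},\eta_{\al},g)$, $\al=1,2,3$. \thmref{thm:nat-conn} then produces a unique natural connection with totally skew-symmetric torsion for the remaining third structure. Applying the appropriate equivalence once more — (v)$\Rightarrow$(i) of \thmref{thm:NN=Nhat} if the third structure is the one with $\al=1$, and (iv)$\Rightarrow$(i) of \thmref{thm:F3F7} if $\al=2$ or $\al=3$ — we conclude that the manifold belongs to the remaining third class, which is precisely the assertion.

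There is essentially no computational content here; the one point requiring care is the $\al=1$ case, where "membership in $\W_2\oplus\W_4\oplus\W_9\oplus\W_{10}\oplus\W_{11}$" must be identified with "$\widehat N_1=0$ and $\xi_1$ Killing" rather than merely with "$\widehat N_1=0$". This identification is exactly what the proof of \thmref{thm:NN=Nhat} secures, where substituting $x=y=\xi_1$ and then $x=\f_1x$, $y=\f_1y$, $z=\xi_1$ in \eqref{F1_W3W7} forces $\LLL_{\xi_1}g=0$; for $\al=2,3$ the analogous Killing condition is automatic by \propref{prop:hatN=0=>Kill}. Thus routing the argument through the natural connections (rather than directly through \thmref{thm:hatN} applied to the tensors $\widehat N_{\al}$) is what keeps the Killing conditions correctly accounted for, and this bookkeeping is the only subtlety I anticipate.
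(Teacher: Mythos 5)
Your proposal is correct and follows the paper's own route: the corollary is stated as an immediate consequence of \thmref{thm:nat-conn}, obtained by translating each class membership into the existence of a unique natural connection with totally skew-symmetric torsion via the equivalences (i)$\Leftrightarrow$(v) of \thmref{thm:NN=Nhat} and (i)$\Leftrightarrow$(iv) of \thmref{thm:F3F7}, and translating back. Your remark on the Killing condition for $\xi_1$ correctly identifies why the argument is routed through the connections rather than through \thmref{thm:hatN} alone.
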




Now, we are interested on conditions for coincidence of these three natural connections $D^\al$, $(\al=1,2,3)$ with totally skew-symmetric torsion for the particular almost contact structures with the metric $g$. Then we shall say that it exists a \emph{natural connection with totally skew-symmetric torsion for the almost contact HN-metric 3-structure}.

\begin{thm}\label{thm:D}
Let $(\MM,\fa,\xia,\etaa,g)$, $(\al=1,2,3)$ be a manifold with almost contact HN-metric 3-structure for which the associated Nijenhuis tensors $\widehat N_\al$ vanish and $\xi_1$ is Killing.
This manifold admits
a linear connection $D$ with totally skew-sym\-met\-ric torsion preserving the almost contact HN-metric 3-structure if and only if the following equalities are valid
\begin{equation}\label{*}
\begin{array}{l}
F_1(x,y,\f_1 z)-F_1(y,x,\f_1 z)-F_1(\f_1 z,x,y)
+2F_1(x,\f_1 y,\xi_1)\eta_1(z)
\\
=-\frac{1}{2} \sx\bigl\{F_2(x,y,\f_2 z)
-3\eta_2(x)F_2(y,\f_2 z,\xi_2)\bigr\} %
\\
=-\frac{1}{2} \sx\bigl\{F_2(x,y,\f_3 z)
-3\eta_3(x)F_3(y,\f_3 z,\xi_3)\bigr\}. %
\end{array}
\end{equation}
If $D$ exists, it is unique.
\end{thm}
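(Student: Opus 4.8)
The plan is to reduce the existence of the connection $D$ to the coincidence of the three natural connections with totally skew-symmetric torsion that the hypotheses already guarantee, and then to recognise the displayed equalities \eqref{*} as the explicit form of that coincidence.

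First I would record what the hypotheses yield. Since $\widehat N_1=0$ and $\xi_1$ is Killing, \thmref{thm:NN=Nhat} applies to the almost contact metric structure $(\f_1,\xi_1,\eta_1,g)$: there is a unique natural connection $D^1$ with totally skew-symmetric torsion, and by \eqref{D1T1} and \eqref{T1} its torsion $(0,3)$-tensor is $T_1$. Similarly $\widehat N_2=0$ forces $\xi_2$ to be Killing by \propref{prop:hatN=0=>Kill}, so \thmref{thm:F3F7} yields a unique natural connection $D^2$ with totally skew-symmetric torsion $T_2$ as in \eqref{D2T2} and \eqref{T37}; running the same argument for $\al=3$ produces a unique $D^3$ whose torsion $T_3$ is obtained from \eqref{T37} by replacing the index $2$ with $3$ throughout.

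The key step is the equivalence: a linear connection $D$ with totally skew-symmetric torsion preserves the whole almost contact HN-metric $3$-structure if and only if $D^1=D^2=D^3$, and in that case $D$ is this common connection. One direction is immediate --- if the three coincide, their common value annihilates $\f_\al$, $\xi_\al$, $\eta_\al$ for every $\al$ as well as $g$, and it has totally skew-symmetric torsion, so it is such a $D$. For the converse, any such $D$ is in particular a natural connection with totally skew-symmetric torsion for each individual structure $(\fa,\xia,\etaa,g)$; the uniqueness clauses of \thmref{thm:NN=Nhat}(v) and \thmref{thm:F3F7}(iv) (and the counterpart of the latter for $\al=3$) then force $D=D^\al$ for all three values of $\al$. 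Hence $D$ exists exactly when $D^1=D^2=D^3$, and the uniqueness of $D$ is inherited from that of $D^1$.

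It remains to translate $D^1=D^2=D^3$ into tensor identities. By \eqref{D1T1}, \eqref{D2T2} and the analogous formula for $D^3$, equality of the connections is equivalent to the equality $T_1=T_2=T_3$ of the associated torsion $(0,3)$-tensors. Substituting the explicit expression \eqref{T1} for $T_1$, \eqref{T37} for $T_2$, and its index-$3$ version for $T_3$, the two equations $T_1=T_2$ and $T_1=T_3$ reproduce exactly the equalities in \eqref{*}, which completes the proof. The argument is essentially bookkeeping; the only point that genuinely requires care is the converse direction of the key step, where one must invoke the \emph{uniqueness} --- not merely the existence --- of the structure-preserving skew-torsion connection in the metric, the B-metric and the third B-metric settings, which is precisely why the hypotheses are arranged so as to place each of the three structures in the relevant distinguished class.
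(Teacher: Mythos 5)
Your proposal is correct and follows essentially the same route as the paper: existence of the three natural connections $D^\al$ from \thmref{thm:NN=Nhat}, \thmref{thm:F3F7} (via \propref{prop:hatN=0=>Kill}) under the stated hypotheses, and then the observation that existence of $D$ amounts to the coincidence $D^1=D^2=D^3$, i.e.\ $T_1=T_2=T_3$, which by \eqref{D1T1}, \eqref{T1}, \eqref{D2T2}, \eqref{T37} is exactly \eqref{*}. Your explicit use of the uniqueness clauses to justify the ``only if'' direction is simply a more detailed spelling-out of what the paper leaves implicit.
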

\begin{proof}
According to \thmref{thm:NN=Nhat} and \thmref{thm:F3F7}, since $\widehat N_\al=\LLL_{\xi_1}g=0$ are valid then
there exist the natural connections $D^\al$, $(\al=1,2,3)$ with totally skew-sym\-metric torsion $T_\al$ for the structures $(\fa,\xia,\etaa,g)$.
Bearing in mind \eqref{D1T1}, \eqref{T1}, \eqref{D2T2} and \eqref{T37}, the coincidence of $D^1$, $D^2$ and $D^3$ is equivalent to the conditions \eqref{*}. 
\end{proof}

\section{A 7-dimensional Lie group as a manifold with almost contact HN-metric 3-structure}


Let $\LL$ be a 7-dimensional real connected Lie group, and
$\mathfrak{l}$ be its Lie algebra with a basis
$\{e_1,e_2,e_3,e_4,e_5,e_6,e_7\}$.
Then an arbitrary vector $x$ in $T_p\LL$ at $p\in \LL$ is presented by $x=x^{i}e_i$ $(i=1,2,\dots,7)$.

Now we introduce an almost contact HN-metric 3-structure
$(\f_{\al},\xi_{\al},\eta_{\al})$ by a standard way as follows 
\begin{equation}\label{str-exa}
\begin{array}{l}
\begin{array}{llll}
\f_1e_1=e_2,\quad & \f_1e_2=-e_1,\quad & \f_1e_3=e_4, \quad & \f_1e_4=-e_3,\\
    \f_1e_5=o,\quad & \f_1e_6=e_7, \quad & \f_1e_7=-e_6,\\
\f_2e_1=e_3, \quad & \f_2e_2=-e_4, \quad & \f_2e_3=-e_1, \quad & \f_2e_4=e_2,\\
    \f_2e_5=-e_7,\quad & \f_2e_6=o, \quad & \f_2e_7=e_5,\\
\f_3e_1=e_4,\quad & \f_3e_2=e_3, \quad & \f_3e_3=-e_2, \quad & \f_3e_4=-e_1, \\
    \f_3e_5=e_6, \quad & \f_3e_6=-e_5, \quad & \f_3e_7=o,\\
\xi_1=e_5,\quad & \xi_2=e_6, \quad & \xi_3=e_7,\quad \\
\eta_1=x^5, \quad & \eta_2=x^6, \quad & \eta_3=x^7,
\end{array}
\end{array}
\end{equation}
where $o$ is the zero vector in $T_p\LL$, $p\in \LL$.

Let $g$ be a pseudo-Riemannian metric such that
\[
\begin{array}{l}
  g(e_1,e_1)=g(e_2,e_2)=-g(e_3,e_3)=-g(e_4,e_4)\\
  \phantom{g(e_1,e_1)}
            =-g(e_5,e_5)=g(e_6,e_6)=g(e_7,e_7)=1, \\
  g(e_i,e_j)=0,\; i\neq j.
\end{array}
\]

The almost contact HN-metric 3-structure
$(\f_{\al},\xi_{\al},\eta_{\al})$ on $H$ coincides with the almost hypercomplex HN-metric structure considered in \cite{GriManDim12}. The almost hypercomplex structure is defined as in \cite{Som}.

Let us consider $(\LL,\f_{\al},\xi_{\al},\eta_{\al},g)$ with the Lie algebra $\mathfrak{l}$
determined by the following nonzero commutators:
\[
\left
[e_1,e_2\right]=[e_3,e_4]=\lm e_7,
\]
where $\lm\in\R\setminus\{0\}$.


By the well-known Koszul equality, we compute the components of the Levi-Civita connection $\n$ with respect to the basis and the nonzero ones of them are: 
\begin{equation}\label{nabli-exa}
\begin{array}{c}
\begin{array}{c}
\n_{e_1} e_2=-\n_{e_2} e_1=\n_{e_3}e_4=-\n_{e_4} e_3=\frac12\lm e_7,
\end{array}\\
\begin{array}{ll}
\n_{e_1} e_7=\n_{e_7} e_1=-\frac12\lm  e_2, &\quad
\n_{e_2} e_7=\n_{e_7} e_2=\frac12\lm  e_1, \\
\n_{e_3} e_7=\n_{e_7} e_3=\frac12\lm  e_4, &\quad
\n_{e_4} e_7=\n_{e_7} e_4=-\frac12\lm  e_3.
\end{array}
\end{array}
\end{equation}

\begin{prop}\label{prop:exa-class}
Let $(\LL,\f_{\al},\xi_{\al},\eta_{\al},g),$ $(\al=1,2,3),$ be the Lie group $\LL$ with almost contact HN-metric 3-structure depending on the nonzero real parameter $\lm$. Then this manifold belongs to the following basic
classes, according to the corresponding classification in \emph{\cite{AlGa}} and \emph{\cite{GaMiGr}}$:$
\begin{itemize}
  \item $\W_{10}$ with respect to $(\f_{1},\xi_{1},\eta_{1},g);$
  \item $\F_{3}$ with respect to $(\f_{2},\xi_{2},\eta_{2},g);$
  \item $\F_{7}$ with respect to $(\f_{3},\xi_{3},\eta_{3},g).$
\end{itemize}
\end{prop}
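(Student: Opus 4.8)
The plan is to compute explicitly the three fundamental tensors $F_\al$, $(\al=1,2,3)$, for the given Lie group from the Levi-Civita connection data in \eqref{nabli-exa}, and then match the nonzero components against the defining conditions of the basic classes in the classifications of \cite{AlGa} and \cite{GaMiGr}. First I would record that by \eqref{F} we have $F_\al(x,y,z)=g\bigl((\n_x\fa)y,z\bigr)=g(\n_x\fa y,z)-g(\fa\n_x y,z)$, so that each $F_\al$ is determined by the bracket relations $[e_1,e_2]=[e_3,e_4]=\lm e_7$ through the $\n_{e_i}e_j$ already listed, together with the structure endomorphisms $\fa$ from \eqref{str-exa}. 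Working basis-vector by basis-vector, one finds that all $F_\al(e_i,e_j,e_k)$ vanish except for those built from the block $\{e_1,e_2,e_3,e_4\}$ paired with $e_5,e_6,e_7$ and the Reeb directions; the surviving components are proportional to $\lm$.

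Next I would carry out the bookkeeping separately for $\al=1,2,3$. For $\al=1$: $\f_1$ fixes the $2$-planes $\{e_1,e_2\}$, $\{e_3,e_4\}$, $\{e_6,e_7\}$ and kills $e_5=\xi_1$; one checks the only nonzero $F_1$-components involve $\n_{\cdot}e_7$ and $\n_{\cdot}e_6$, and they satisfy $F_1(x,y,z)=\eta_1(x)\,\bigl[\,\text{something}\,\bigr]$-type relations characteristic of $\W_{10}$ — the class in which $F_1(x,y,z)=\eta_1(x)F_1(\xi_1,y,z)$ with the appropriate $\f_1$-antiinvariance. Similarly, for $\al=2$ the metric $g$ is a B-metric, $\xi_2=e_6$, and the computed $F_2$ turns out to be totally symmetric in its three arguments with vanishing traces and vanishing $\xi_2$-contractions, which is exactly the defining condition of $\F_3$ (the "quasi-Kähler"-type class with $\s F_2=0$, $\xi_2$ Killing, and $F_2(\xi_2,\cdot,\cdot)=0$). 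For $\al=3$, $\xi_3=e_7$, and here the nonzero components of $F_3$ are all of the form $F_3(x,y,\xi_3)$ (and their $\f_3$-images), with no $\xi_3$-free part, placing the structure in $\F_7$; one must verify in particular that the $\F_7$-defining identity $F_3(x,y,z)=\eta_3(z)F_3(x,\f_3 y,\xi_3)+\eta_3(y)F_3(x,\f_3 z,\xi_3)$ (and $F_3(x,y,\xi_3)$ being the relevant symmetric $2$-tensor) holds, using \eqref{Fa-prop} to reduce the verification to a handful of components. The consistency of these three assignments is also sanity-checked against Corollary~\ref{cor-G1}, since $\W_{10}\subset\W_2\oplus\W_4\oplus\W_9\oplus\W_{10}\oplus\W_{11}$, $\F_3\subset\F_3\oplus\F_7$ and $\F_7\subset\F_3\oplus\F_7$.

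The main obstacle will be the sheer bookkeeping of the $F_\al$-components combined with correctly reading off the defining relations of the three basic classes from \cite{AlGa} and \cite{GaMiGr} — in particular, being careful with the two different sign conventions (the $\ep_\al$-twisted $\f_\al$-antiinvariance in \eqref{Fa-prop}) when passing between the Hermitian case $\al=1$ and the Norden cases $\al=2,3$. A secondary point requiring care is checking, for each $\al$, that the Reeb vector field $\xi_\al$ has the correct Killing/non-Killing behaviour: from \eqref{nabli-exa} one sees $\n_{e_i}\xi_1$ and $\n_{e_i}\xi_2$ both vanish while $\n_{e_i}\xi_3=\n_{e_i}e_7\neq 0$, so $\xi_1,\xi_2$ are parallel (hence Killing) but $\xi_3$ is not parallel — consistent with $\W_{10}$, $\F_3$ (which requires $\xi_2$ Killing) and $\F_7$ (in which $\xi_3$ need not be Killing). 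Once all nonzero $F_\al(e_i,e_j,e_k)$ are tabulated, the class identification for each $\al$ is a direct comparison, so I would organize the proof as: (1) compute $F_1,F_2,F_3$ on basis triples; (2) observe the pattern of nonzero components; (3) invoke the defining conditions of $\W_{10}$, $\F_3$, $\F_7$ respectively and confirm they are exactly met and no smaller class applies (the latter because $\lm\neq 0$ guarantees $F_\al\not\equiv 0$).
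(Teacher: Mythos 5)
Your overall strategy coincides with the paper's proof: one computes the components $(F_{\al})_{ijk}=F_{\al}(e_i,e_j,e_k)$ directly from \eqref{F}, \eqref{str-exa} and \eqref{nabli-exa}, and then matches them against the defining conditions of the basic classes in \cite{AlGa} and \cite{GaMiGr}. However, several of the concrete conditions and component patterns you plan to verify are wrong, and for $\al=1$ the verification as you describe it would fail. The actual nonzero components are those in \eqref{F1F2F3}: for $F_1$ they are $(F_1)_{117},(F_1)_{126},\dots,(F_1)_{447}$, i.e.\ all three arguments lie in $\{e_1,\dots,e_4,e_6,e_7\}$ and the index $5$ (the $\xi_1$-direction) never occurs. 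Hence $F_1$ has no components of the form $\eta_1(x)\cdot(\dots)$ and no components involving $\xi_1$ at all; your proposed characterisation of $\W_{10}$ as the class with $F_1(x,y,z)=\eta_1(x)F_1(\xi_1,y,z)$ would therefore force you to conclude that the example is \emph{not} in $\W_{10}$, contradicting the statement. In the classification of \cite{AlGa}, $\W_{10}$ is one of the classes describing the ``horizontal'' ($\xi_1$-free) part of $F_1$, and it is exactly this part that is nonzero here; you must take the defining condition of $\W_{10}$ from \cite{AlGa} rather than the $\eta$-type condition you wrote. Similarly, your prediction that $F_2$ comes out ``totally symmetric in its three arguments'' is false: e.g.\ $(F_2)_{125}=-(F_2)_{215}=\frac12\lm$, so $F_2$ is antisymmetric in its first two slots on the nonzero components; what actually holds (and what $\F_3$ requires) is the vanishing of the cyclic sum of $F_2$ together with the vanishing of all components involving $\xi_2=e_6$ — note also that total symmetry combined with your own condition $\s F_2=0$ would force $F_2=0$, so your description is internally inconsistent.

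A second genuine error is your remark that $\F_7$ does not require $\xi_3$ to be Killing. By \thmref{thm:F3F7} of the paper, the class $\F_3\oplus\F_7$ is characterised in part by $\xi$ being Killing, so membership in $\F_7$ does require it. In the example, $\xi_3=e_7$ is indeed not parallel, but it \emph{is} Killing: by \eqref{nabli-exa} and the signature of $g$ one has, for instance, $g(\n_{e_1}e_7,e_2)+g(\n_{e_2}e_7,e_1)=-\frac12\lm+\frac12\lm=0$ and $g(\n_{e_3}e_7,e_4)+g(\n_{e_4}e_7,e_3)=-\frac12\lm+\frac12\lm=0$, so $\LLL_{\xi_3}g=0$. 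Your prediction for $F_3$ (all nonzero components of the form $F_3(x,y,\xi_3)$, antisymmetric in $x,y$) is correct, but the identity you propose to check for $\F_7$, with $\f_3$ inserted in the $\eta_3$-terms, is not the defining identity of that class; again the condition must be taken verbatim from \cite{GaMiGr}. In short, the computational scheme is the right one (and is the paper's), but as written your plan attaches incorrect defining conditions to $\W_{10}$ (and, in detail, to $\F_3$ and $\F_7$), and these are precisely the points where the proof would break down.
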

\begin{proof}
Using \eqref{F}, \eqref{str-exa} and \eqref{nabli-exa}, we obtain the basic components of tensors $(F_{\al})_{ijk}\allowbreak{}=F_{\al}(e_i,e_j,e_k)$ as follows:
\begin{equation}\label{F1F2F3}
\begin{array}{l}
\frac12\lm  =(F_1)_{117}=(F_1)_{126}=-(F_1)_{216}=(F_1)_{227}\\
\phantom{\frac12\lm}
            =(F_1)_{337}=(F_1)_{346}=-(F_1)_{436}=(F_1)_{447}\\
\phantom{\frac12\lm}
            =(F_2)_{125}=(F_2)_{147}=-(F_2)_{215}=(F_2)_{237}\\
\phantom{\frac12\lm}
            =-(F_2)_{327}=(F_2)_{345}=-(F_2)_{417}=-(F_2)_{435}\\
\phantom{\frac12\lm}
            =-(F_3)_{137}=(F_3)_{247}=(F_3)_{317}=-(F_3)_{427}.
\end{array}
\end{equation}


From \eqref{F1F2F3}, applying the classification conditions for the relevant classification in \cite{AlGa} or \cite{GaMiGr}, we have the classes in the statement, respectively.
\end{proof}

Bearing in mind \propref{prop:exa-class}, we deduce that the conditions (i) of \thmref{thm:NN=Nhat} and \thmref{thm:F3F7} are fulfilled and therefore there exist natural connections $D^\al$ ($\al=1,2,3$) for the corresponding structure $(\f_\al,\xi_\al,\eta_\al,g)$ on $\LL$.
We get the components with respect to the basis of their torsions $T_\al$ ($\al=1,2,3$) by direct computations from \eqref{D1T1}, \eqref{T1}, \eqref{D2T2}, \eqref{T37} and \eqref{F1F2F3} as follows
\[
\begin{array}{c}
(T_1)_{127}=(T_1)_{347}=-\lm,
\\
(T_2)_{127}=-(T_2)_{145}=-(T_2)_{235}=(T_2)_{347}=-\frac12 \lm,
\\
(T_3)_{127}=(T_3)_{347}=-\lm.
\end{array}
\]
Obviously, the connections $D^1$ and $D^3$ coincides but $D^2$ differs from them.
The condition \eqref{*} of \thmref{thm:D} is not fulfilled and therefore it does not exist a unique connection with totally skew-sym\-met\-ric torsion preserving the almost contact HN-metric 3-structure on $\LL$.

%
%
%
%
%
%
%
%
%
%
%
%
%

This work was partially supported by project NI15-FMI-004 of the
Scientific Research Fund at the University of Plovdiv.
The author wishes to thank Professor Stefan Ivanov for valuable discussions about the present paper.



\end{document}